\newtheorem{theorem}{Theorem}
\newtheorem{lemma}{Lemma}
\def\beq{ \begin{equation} }
\def\eeq{ \end{equation} }
\def\mn{\medskip\noindent}
\def\ep{\epsilon}
\def\square{\vcenter{\vbox{\hrule height .4pt
  \hbox{\vrule width .4pt height 5pt \kern 5pt
        \vrule width .4pt} \hrule height .4pt}}}
\def\TT{\mathbb{T}}
\def\ZZ{\mathbb{Z}}
\def\clearp{}
\begin{document}

\title{The Contact Process on Periodic Trees }
\author{Yufeng Jiang, Remy Kassem, Grayson York, Brandon Zhao, \\
Xiangying Huang, Matthew Junge, and Rick Durrett \\
\small Dept of Math, Duke University, Box 90320, Durham NC 27708-0320}

\date{\today}		

\maketitle

\begin{abstract}
A little over 25 years ago Pemantle pioneered the study of the contact process on trees, and showed that on homogeneous trees the critical values $\lambda_1$ and $\lambda_2$ for global and local survival were different. He also considered trees with periodic degree sequences, and Galton-Watson trees. Here, we will continue the study of the periodic case. Two significant new results give sharp asymptotics for the critical value $\lambda_2$ of $(1,n)$ trees, $\lambda_2(n) \sim \sqrt{0.5( \log n)/n}$, and generalize that result to the $(a_1,\ldots, a_k, n)$ tree when $\max_i a_i \le n^{1-\ep}$ and $a_1 \cdots a_k = n^b$. We also give results for $(a,b,c)$ trees. Our results in this case improve those found by Pemantle. However, the values come from solving cubic equations, so the explicit formulas are not pretty, but it is surprising that they depend only on $a+b+c$ and $abc$.
\end{abstract}

\section{Introduction}

The contact process can be defined on any graph as follows: occupied sites become vacant at rate 1, while
vacant sites become occupied at rate $\lambda$ times the number of occupied neighbors. 
Harris (1984) introduced the contact process on $\ZZ^d$ where it has been extensively studied. See Liggett (1999) for a summary of
most of what is known. 

Pemantle (1992) began the study of contact processes on trees.
Let $\xi_t$ be the set of occupied sites at time $t$ and use $\xi^0_t$ to denote the process with $\xi^0_0 = \{0\}$
where 0 is the root of the tree. His main new result was that the process had two phase transitions:
\begin{align*}
\lambda_1 &= \inf\{ \lambda : P( \xi^0_t \neq \emptyset \hbox{ for all $t$}) > 0 \} \\
\lambda_2 &=\inf\{ \lambda : P( 0 \in \xi^0_t \hbox{ infinitely often}) > 0 \}
\end{align*}
Let $\TT_d$ be the tree in which each vertex has $d+1$ neighbors. Pemantle showed that $\lambda_1< \lambda_2$ when $d\ge 3$
by getting upper bounds on $\lambda_1$ and lower bounds on $\lambda_2$. Liggett (1996) proved that in $d=2$
$\lambda_1 < 0.605 < 0.609 < \lambda_2$ to settle the last case. $\TT_1=\ZZ$, which has $\lambda_1=\lambda_2$. 
In (1996) Stacey gave an elegant proof that on $\TT_d$ and a number of other graphs we have $\lambda_1< \lambda_2$.  

To complement the results for the contact process,
we will also consider branching random walk $\zeta_t$ in which $\zeta_t(x)$ is the number of particles at $x$ at time $t$.
Intuitively, it is the contact process without the restriction of one particle per site. Particles give birth onto
neighboring sites at rate $\lambda$ and die at rate 1. Let $\lambda_g$ and $\lambda_\ell$ be the critical values for
branching random walk that correspond to $\lambda_1$ and $\lambda_2$. Pemantle and Stacey (2001) considered the contact process
and branching random walk on Galton-Watson trees in which each vertex has an independent and identically distributed number of children.
They found situations in which $\lambda_1=\lambda_2$ or $\lambda_g = \lambda_\ell$.

Here, we will concentrate on periodic trees. Although simpler than Galton-Watson trees, we are able to prove very detailed results which quantify the observation that the critical value is determined by the largest degree, and have unexpected symmetries in the period three care. To define the class of periodic trees and to study their properties,  
it is convenient to define a integer valued function $\ell(x)$ on the tree which we think of as the height of $x$. The children of $x$,
which we will denote by $x+$ have $\ell(x+) = \ell(x)+1$ and its parent, denoted by $x-$, has value $\ell(x-) =\ell(x)-1$.  The degree 
of $x$ is $g(\ell(x))$ where $g$ is a periodic function on $\ZZ$. If the period is two, and the values of $g$ are $a$ and $b$ we call
the result the $(a,b)$ tree.

\begin{figure}[ht]
\begin{center}
\begin{picture}(200,200)
\put(130,160){\line(4,1){40}}
\put(130,160){\line(4,-1){40}}
\put(130,120){\line(4,1){40}}
\put(130,120){\line(4,-1){40}}
\put(130,80){\line(4,1){40}}
\put(130,80){\line(4,-1){40}}
\put(130,40){\line(4,1){40}}
\put(130,40){\line(4,-1){40}}
\put(90,140){\line(2,1){40}}
\put(90,140){\line(2,-1){40}}
\put(90,60){\line(2,1){40}}
\put(90,60){\line(2,-1){40}}
\put(50,100){\line(1,1){40}}
\put(50,100){\line(1,-1){40}}
\put(50,100){\line(-1,2){35}}
\put(88,65){0}
\put(5,10){$\ell(x)=-1$}
\put(88,10){0}
\put(128,10){1}
\put(168,10){2}
\put(85,145){$x-$}
\put(128,165){$x$}
\put(172,168){$x+$}
\put(172,148){$x+$}
\end{picture}
\caption{Picture of the height function $\ell(x)$.}
\end{center}
\label{fig:ell}
\end{figure}
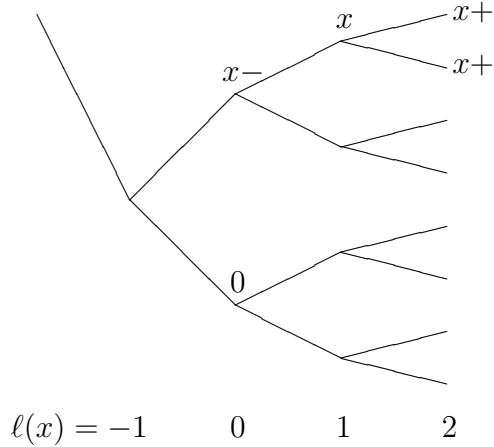

\subsection{Elementary results}

Here the results, with the possible exception of Theorem \ref{lamell}, are not new. However, the proofs are short and set
the stage for our new results. 

\begin{theorem} \label{l1bds}
On the $(a,b)$ tree
$$
\frac{1}{\sqrt{(a+1)(b+1)}} =\lambda_g \le \lambda_1 \le \frac{1}{\sqrt{ab}-1}
$$
\end{theorem}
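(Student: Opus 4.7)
My plan is to establish the three assertions --- $\lambda_g\le \lambda_1$, the identity $\lambda_g = 1/\sqrt{(a+1)(b+1)}$, and the upper bound $\lambda_1\le 1/(\sqrt{ab}-1)$ --- separately.

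\textbf{The inequality $\lambda_g \le \lambda_1$} follows from the standard graphical coupling: build the CP and the BRW on the same Poisson graphical representation and observe that every CP birth is also a BRW birth, so the CP is pointwise dominated by the BRW and global survival of the former forces global survival of the latter.

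\textbf{To compute $\lambda_g$}, I would group the BRW particles by the parity of $\ell(x)$. A vertex at even height has $a+1$ neighbors, all at odd height, so it produces odd-type offspring at total rate $\lambda(a+1)$; symmetrically, an odd-height vertex produces even-type offspring at rate $\lambda(b+1)$. Letting $E(t)$ and $O(t)$ be the expected numbers of particles of the two types, one gets the linear system
$$ \frac{d}{dt}\begin{pmatrix}E\\O\end{pmatrix} = \begin{pmatrix}-1 & \lambda(b+1) \\ \lambda(a+1) & -1\end{pmatrix}\begin{pmatrix}E\\O\end{pmatrix}, $$
whose largest eigenvalue is $-1+\lambda\sqrt{(a+1)(b+1)}$. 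Standard two-type continuous-time Markov branching process theory says the BRW is supercritical iff this eigenvalue is positive, giving $\lambda_g = 1/\sqrt{(a+1)(b+1)}$.

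\textbf{For the upper bound on $\lambda_1$}, I would embed a Galton--Watson process in an ``oriented'' version of the CP in which infections may only be transmitted from parent to child. Since this oriented process is pointwise dominated by the CP, its global survival implies that of the CP. Because the tree is acyclic and back-infection is forbidden, the sub-processes in disjoint subtrees below distinct first-infected vertices are independent, so the oriented process is a genuine two-type continuous-time branching process. A parent with lifetime $T\sim\mathrm{Exp}(1)$ and $k$ children infects each child independently (conditional on $T$) with probability $1-e^{-\lambda T}$, yielding unconditional mean $k\lambda/(\lambda+1)$ infected children. Iterating over two generations, an infected even vertex produces on average $ab\lambda^2/(\lambda+1)^2$ infected even grandchildren, and the resulting single-type Galton--Watson process is supercritical precisely when $\lambda>1/(\sqrt{ab}-1)$, which gives the bound.

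The only delicate step is justifying that this oriented CP really produces an honest branching process with i.i.d.\ offspring distributions; the key input is that forbidding back-infection decouples the subtrees below distinct first-infected vertices, after which conditioning on each vertex's lifetime yields the mean $k\lambda/(\lambda+1)$ cleanly. Everything else reduces to $2\times 2$ linear algebra and a one-variable inequality.
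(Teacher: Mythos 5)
Your proposal is correct, and it follows the paper's argument for the upper bound on $\lambda_1$ essentially verbatim (oriented contact process on the tree of descendants, infection probability $\lambda/(\lambda+1)$ per edge, two-generation Galton--Watson with mean $ab\lambda^2/(\lambda+1)^2$), including correctly flagging the need to justify that forbidding back-infection decouples the subtrees, which the paper asserts in one line. The computation of $\lambda_g$, however, takes a genuinely different route. The paper counts non-self-avoiding paths of length $2m$ on the tree (there are $(a+1)^m(b+1)^m$ of them) and combines this with Lemma~\ref{Etrails} (expected number of BRW infection trails along a fixed length-$n$ path is $\lambda^n$) to bound the expected total number of infection trails by $\sum_m (a+1)^m(b+1)^m\lambda^{2m}$, which converges exactly when $\lambda<1/\sqrt{(a+1)(b+1)}$; as written this establishes only the lower bound $\lambda_g\ge 1/\sqrt{(a+1)(b+1)}$. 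You instead exploit the bipartite structure of the $(a,b)$ tree by height parity and observe that the total BRW particle count is a genuine two-type continuous-time Markov branching process with mean matrix having Malthusian parameter $-1+\lambda\sqrt{(a+1)(b+1)}$; the standard criterion (supercritical iff Malthusian parameter is positive) then yields the equality $\lambda_g=1/\sqrt{(a+1)(b+1)}$ in both directions at once. Your route is more self-contained for the equality claimed in the theorem; the paper's path-counting route is lighter on branching-process machinery and is reused for the $\lambda_\ell$ computation in Theorem~\ref{brwcv} and Theorem~\ref{lamell}, so it buys economy across the section.
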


\noindent
As $a,b \to\infty$ the upper and lower bounds are asymptotic to $1/\sqrt{ab}$, which gives the growth rate of the tree of descendants of a fixed vertex. 
When $a=3$ and $b=4$ the bounds are
$$
0.2236 = 1/\sqrt{20}  =\lambda_g \le \lambda_1 \le 1/(\sqrt{12}-1) = 0.4058
$$

To compute $\lambda_\ell$ we begin with a general result. If $M_0(v,2n)$ is the number of paths of length $2n$ that begin and end at $v$ then
\beq
M_0(v,2m) \cdot M_0(v,2n) \le M_0(v,2(m+n))
\label{subm}
\eeq
From this it follows easily that

\begin{lemma} \label{subadd}
As $n\to\infty$
$$
M_0(v,2n)^{1/2n} \to \sup_{k\ge 1} M_0(v,2k)^{1/2k} \equiv M_0
$$
\end{lemma}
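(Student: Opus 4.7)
The plan is to recognize this as a standard application of Fekete's subadditive lemma applied to the logarithms of $M_0(v,2n)$. The submultiplicativity (\ref{subm}) is exactly what we need, once we take logs.

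First I would set $a_n := \log M_0(v,2n)$. The sequence is well defined since $M_0(v,2n) \ge 1$ (any closed walk that goes to a neighbor and back $n$ times is counted), and $a_n$ is bounded above by $2n \log D$, where $D$ is the maximum degree in the periodic tree; this guarantees that $\sup_k a_k/k$ is finite. Taking logarithms in (\ref{subm}) yields the superadditivity
\[
a_m + a_n \le a_{m+n}.
\]

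Next I would invoke Fekete's lemma for the superadditive sequence $(a_n)$: $\lim_{n\to\infty} a_n/n = \sup_{k\ge 1} a_k/k$. The short argument is to fix $k$, write $n = qk + r$ with $0 \le r < k$, use superadditivity $q$ times to get $a_n \ge q\, a_k + a_r \ge q\, a_k$ (since $a_r \ge 0$), divide by $n$, and let $n\to\infty$ with $k$ fixed to conclude $\liminf_n a_n/n \ge a_k/k$. Taking the supremum over $k$ gives $\liminf \ge \sup$, and since trivially $\sup \ge \limsup$, the limit exists and equals $\sup_k a_k/k$.

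Finally, exponentiating gives $M_0(v,2n)^{1/n} \to \sup_{k\ge 1} M_0(v,2k)^{1/k}$, and taking square roots (which commutes with the sup and the limit since all quantities are positive) yields
\[
M_0(v,2n)^{1/2n} \to \sup_{k\ge 1} M_0(v,2k)^{1/2k}.
\]
There is no real obstacle here; the only point that requires any care is checking that the supremum is finite, which follows from the crude degree bound mentioned above.
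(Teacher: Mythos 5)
Your proof is correct and is precisely the standard Fekete argument the paper implicitly invokes when it states the lemma ``follows easily'' from \eqref{subm}. The only nit is terminological: \eqref{subm} is \emph{super}-multiplicativity (not sub-), which you in fact use correctly when you pass to the superadditive sequence $a_n = \log M_0(v,2n)$.
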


\noindent
If the graph is connected then the limit is independent of $v$. 

\begin{theorem} \label{brwcv}
The critical value for local survival of the branching random walk is 
$$
\lambda_\ell = 1/M_0.
$$
\end{theorem}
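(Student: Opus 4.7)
The plan is to prove $\lambda_\ell = 1/M_0$ by matching inequalities, both anchored to the mean occupation $u_t(v) := E_v[\zeta_t(v)]$ for BRW started from a single particle at $v$. By linearity of expectation $u_t$ satisfies $\partial_t u_t = (\lambda A - I)u_t$ where $A$ is the adjacency matrix of the tree, so
$$u_t(v) \;=\; e^{-t}\sum_{k\ge 0}\frac{(\lambda t)^k}{k!}(A^k)_{vv} \;=\; e^{-t}\sum_{n\ge 0}\frac{(\lambda t)^{2n}}{(2n)!}M_0(v,2n),$$
the second equality by bipartiteness. Lemma \ref{subadd} and a standard exponential-generating-function estimate then give $t^{-1}\log u_t(v) \to \lambda M_0 - 1$ as $t\to\infty$, while termwise integration yields $\int_0^\infty u_t(v)\,dt = \sum_{n\ge 0}\lambda^{2n}M_0(v,2n)$, a series that converges iff $\lambda M_0 < 1$.

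For the lower bound $\lambda_\ell \ge 1/M_0$: if $\lambda < 1/M_0$ then $u_t(v) \le C e^{-\delta t}$ for some $\delta > 0$, so Fubini gives $E_v\bigl[\int_0^\infty \zeta_t(v)\,dt\bigr] < \infty$ and total particle-time at $v$ is a.s.\ finite. Since each particle arriving at $v$ occupies it for an independent $\mathrm{Exp}(1)$ holding time until its death, finite total particle-time forces only finitely many arrivals; hence $\zeta_t(v) = 0$ for all large $t$ a.s., so local survival fails.

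For the upper bound $\lambda_\ell \le 1/M_0$: if $\lambda > 1/M_0$ I would build the embedded Galton--Watson process of returns to $v$. Set $Z_0 = 1$, and let $Z_{n+1}$ count particles at $v$ whose genealogical line back to a generation-$n$ ancestor avoids $v$ in between. Since removing $v$ disconnects the tree into subtrees within which excursions evolve independently, the strong Markov property makes offspring counts i.i.d.\ across generation-$n$ particles with common mean $\mu(\lambda)$. The total expected number of particle-visits to $v$ equals both $\sum_n \mu(\lambda)^n$ (Galton--Watson) and $\int_0^\infty u_t(v)\,dt = \sum_n\lambda^{2n}M_0(v,2n)$ (each visit contributes an independent $\mathrm{Exp}(1)$ sojourn of mean $1$). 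Divergence of the path series for $\lambda > 1/M_0$ thus forces $\mu(\lambda) \ge 1$; applying this at some $\lambda_0 \in (1/M_0,\lambda)$ and invoking strict monotonicity of $\mu$ in $\lambda$ upgrades this to $\mu(\lambda) > 1$, so $\{Z_n\}$ is supercritical and $P(Z_n \to \infty) > 0$, giving local survival.

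The main obstacle is making the Galton--Watson embedding rigorous. One has to verify (i) that offspring counts really are i.i.d., for which the fact that $v$ separates the tree into independent subtrees plus the strong Markov property are essential; (ii) that the identity $\sum_n \mu(\lambda)^n = \sum_n\lambda^{2n}M_0(v,2n)$ holds in $[0,\infty]$; and (iii) strict monotonicity of $\mu$ in $\lambda$, which follows from a pathwise coupling of BRWs at rates $\lambda_0 < \lambda$ in which extra births are added independently. An alternative to the Galton--Watson embedding is a Paley--Zygmund second-moment bound on $\zeta_T(v)$ for large $T$, but controlling $E_v[\zeta_T(v)^2]$ requires a separate path-pair computation that is considerably messier.
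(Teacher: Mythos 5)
Your argument is correct, but it is a genuinely different (and somewhat longer) route than the paper's. Both proofs hinge on the series $\sum_n \lambda^{2n} M_0(v,2n)$: you derive it as the total expected occupation time $\int_0^\infty E_v[\zeta_t(v)]\,dt$ via the generator and bipartiteness, whereas the paper obtains it directly as the expected number of infection trails from $v$ back to $v$ using Lemma~\ref{Etrails}, with no need to write down and solve the mean-field ODE. For the lower bound the two arguments are then essentially the same. For the upper bound you and the paper both build an embedded Galton--Watson process of visits to $v$, but with different generation rules. You count \emph{first returns}, giving a GW process with offspring mean $\mu(\lambda)$, which forces the detour of deducing $\mu(\lambda_0)\ge 1$ from divergence of $\sum_k \mu(\lambda_0)^k$ at an intermediate $\lambda_0\in(1/M_0,\lambda)$ and then upgrading to the strict inequality $\mu(\lambda)>1$ via a coupling-based monotonicity argument. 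The paper instead fixes a single step size $2n$, with $n$ chosen so large that $\lambda^{2n}M_0(v,2n)>1$ (possible by Lemma~\ref{subadd} whenever $\lambda M_0>1$), and counts particles at $v$ in generations $2n, 4n, 6n, \ldots$; the resulting GW process has offspring mean $\lambda^{2n}M_0(v,2n)>1$ by construction, so both the first-return/return identity and the monotonicity step are bypassed. The independence-across-subtrees issue you flag as the main obstacle is needed in both constructions (the paper relies on it implicitly in asserting that $X_{kn}$ ``dominates a supercritical branching process''), so the fixed-step choice rather than first returns is the real simplification that the paper's proof buys.
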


\noindent
This result is Lemma 3.1 in Stacey and Pemantle (2001). To apply this result we have to compute $M_0$.

\begin{theorem} \label{lamell}
On the $(a,b)$ tree
$$
\lambda_\ell = \frac{ 1}{ \sqrt{ a} + \sqrt{b} }
$$
\end{theorem}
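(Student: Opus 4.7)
By Theorem~\ref{brwcv}, it suffices to show $M_0=\sqrt{a}+\sqrt{b}$, and since the tree is connected, $M_0$ does not depend on the base point, so we may take $v$ to be the root, which we assume is a type-$a$ vertex with $a$ children of type $b$.

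The plan is to compute the generating function $G(z):=\sum_{n\ge 0} M_0(v,2n)\,z^{2n}$ and read off $M_0$ as the reciprocal of its radius of convergence. The key point is that on a tree every closed walk at $v$ decomposes into a sequence of excursions into the subtrees hanging below $v$. Writing $f_c(z)$, for $c\in\{a,b\}$, for the generating function of closed walks at the root of a \emph{type-$c$ subtree} (one whose root has $c$ children, each of which is the root of a subtree of the opposite type), an excursion from the root of a type-$a$ subtree goes to one of $a$ type-$b$ children, executes an arbitrary closed walk there, and returns; summing over sequences of excursions,
\begin{align*}
f_a(z) &= \frac{1}{1-az^{2} f_b(z)}, &
f_b(z) &= \frac{1}{1-bz^{2} f_a(z)},
\end{align*}
and $G(z)=f_a(z)$. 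Setting $p:=f_a f_b$ collapses the system into the single quadratic
$$ab\,z^{4} p^{2} \;-\; \bigl(1-(a+b)z^{2}\bigr)\,p \;+\; 1 \;=\; 0.$$

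All power-series coefficients in sight are nonnegative, so the radius of convergence of $G$ equals the smallest positive $z$ at which this quadratic fails to define a single-valued analytic $p(z)$, i.e.\ where its discriminant vanishes:
$$\bigl[1-(a+b)z^{2}\bigr]^{2} \;=\; 4ab\,z^{4} \;\Longleftrightarrow\; \bigl[1-(\sqrt{a}+\sqrt{b})^{2}z^{2}\bigr]\bigl[1-(\sqrt{a}-\sqrt{b})^{2}z^{2}\bigr] \;=\; 0.$$
The smaller positive root is $z^\star = 1/(\sqrt{a}+\sqrt{b})$, at which the discriminant has a simple zero, so $p$ (and hence $f_a=G$) inherits a square-root branch-point. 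Therefore the radius of convergence equals $z^\star$, and $M_0 = 1/z^\star = \sqrt{a}+\sqrt{b}$, which yields $\lambda_\ell = 1/(\sqrt{a}+\sqrt{b})$.

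The main place to be careful is in matching the generating-function singularity to the exponential growth rate of $M_0(v,2n)$. The upper bound $M_0\le\sqrt{a}+\sqrt{b}$ is immediate from convergence of $G$ on $|z|<z^\star$. The matching lower bound uses that at $z^\star$ the quadratic forces a square-root singularity that cannot be absorbed into an analytic extension, combined with Lemma~\ref{subadd} to promote $\limsup$ to an actual limit. Should one prefer to avoid analytic-continuation arguments, an alternative is to exhibit concretely enough returning walks, descending alternately through type-$a$ and type-$b$ levels and using Stirling-type counting for Dyck-like excursions, to obtain $M_0(v,2n)\ge c\,(\sqrt{a}+\sqrt{b})^{2n}/n^{3/2}$ directly.
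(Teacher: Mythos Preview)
Your approach is correct and genuinely different from the paper's. The paper computes $M_0$ by brute-force enumeration: it groups steps in pairs, writes a multinomial for the number of length-$2n$ paths returning to level~$0$, uses Stirling's formula to identify the dominant term as $(\sqrt a+\sqrt b)^{2n}$, and then applies a cyclic-shift (Cycle Lemma) argument to restrict to paths that stay at nonnegative level and hence end at the root. You instead use the standard first-return decomposition to set up the coupled functional equations for $f_a,f_b$, collapse them to a single quadratic in $p=f_af_b$, and read off $M_0$ as the reciprocal of the radius of convergence from the discriminant factorization $[1-(\sqrt a+\sqrt b)^2z^2][1-(\sqrt a-\sqrt b)^2z^2]$. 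Your route is shorter, makes the answer appear transparently, and would extend more cleanly to longer periods; the paper's route is more elementary but needs the Stirling optimization and the separate Cycle-Lemma step.

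One small imprecision worth fixing: in the paper's bi-infinite $(a,b)$ tree the base vertex $v$ has $a{+}1$ neighbors (its $a$ children \emph{and} its parent, all of type~$b$), so the full-tree return generating function is
\[
G(z)=\frac{1}{1-(a{+}1)\,z^{2}f_b(z)}
\]
rather than $f_a(z)$. This does not change the conclusion: the dominant singularity of $G$ is still the branch point of $f_b$ at $z^\star$, and no earlier pole of $G$ intervenes because
\[
(a{+}1)\,(z^{\star})^{2}\,f_b(z^{\star})=\frac{a+1}{\sqrt a\,(\sqrt a+\sqrt b)}<1
\quad\text{whenever }ab>1.
\]
Equivalently, note that the subtree hanging through $v$'s parent is itself (by self-similarity of the periodic tree) a type-$b$ rooted subtree, so $G$ is built from the same $f_a,f_b$ and shares their radius of convergence.
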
 

\noindent 
Note that when $a=b=d$ this is $1/2\sqrt{d}$. When $a=3$, $b=4$ the answer is 
$$
1/(\sqrt{3} + \sqrt{4}) = 0.267949.
$$

Since $\lambda_2 \ge \lambda_\ell$ the next result is an immediate corollary.

\begin{theorem} \label{ablam2}
On the $(a,b)$ tree  
$$
 \frac{1}{\sqrt{a} + \sqrt{b}} \le \lambda_2 
$$
\end{theorem}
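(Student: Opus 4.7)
The plan is to invoke the well-known monotone coupling between the contact process and the branching random walk on the same graph to conclude that $\lambda_2 \ge \lambda_\ell$, and then plug in Theorem \ref{lamell}.

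First I would build both processes simultaneously from a graphical representation: for each directed edge $(x,y)$ take an independent Poisson process of birth marks at rate $\lambda$, and for each vertex $x$ an independent Poisson process of death marks at rate $1$. In the branching random walk $\zeta_t$, each birth mark at $(x,y)$ places one additional particle at $y$ for every particle currently at $x$, and each death mark removes one particle at $x$ if any are present. In the contact process $\xi_t$, a birth mark at $(x,y)$ makes $y$ occupied only if $x$ is occupied, and is otherwise redundant because $y$ may already be occupied. Starting both processes from a single particle at the root, a straightforward induction on the marks shows $\xi_t \subseteq \mathrm{supp}(\zeta_t)$ for all $t \ge 0$.

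Consequently, the event $\{0 \in \xi^0_t \text{ i.o.}\}$ is contained in $\{0 \in \mathrm{supp}(\zeta^0_t) \text{ i.o.}\}$, so if the contact process survives locally at the root with positive probability then so does the branching random walk. This forces $\lambda_2 \ge \lambda_\ell$. Combining with Theorem \ref{lamell}, which identifies $\lambda_\ell = 1/(\sqrt a + \sqrt b)$ on the $(a,b)$ tree, yields the stated inequality.

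There is no real obstacle here; the coupling is classical and the rest is immediate from the earlier theorem. The only thing worth being careful about is that the comparison is between the contact process and branching random walk on the \emph{same} periodic tree, so that the quantity $M_0$ appearing in Theorem \ref{brwcv} and computed in Theorem \ref{lamell} is the one relevant to $\lambda_\ell$, and hence provides the desired lower bound for $\lambda_2$.
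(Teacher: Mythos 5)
Your route is exactly the paper's first proof: the paper states Theorem \ref{ablam2} as an immediate corollary of Theorem \ref{lamell} via the standard domination $\lambda_2 \ge \lambda_\ell$, which is precisely your argument. (The paper's Section \ref{sec:PfTh4} then gives a second, different proof via a harmonic weight function $\bar W(\xi_t)=\prod_{x\in\xi_t}h(x)$, not because the corollary route is insufficient but because that argument generalizes to period three trees, where it yields Theorem \ref{lam23}.) One caveat on your write-up: the graphical representation you describe does not actually produce the branching random walk, since in the BRW each particle dies at rate $1$ and gives birth at rate $\lambda$ per neighbor, so a site with $k$ particles has total death rate $k$ and total birth rate $k\lambda$ toward each neighbor, whereas your per-vertex rate-$1$ death marks and per-edge rate-$\lambda$ birth marks (spawning one new particle per existing particle) give a different law. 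The coupling is easily repaired, e.g.\ by running the BRW with per-particle clocks and letting the contact process follow a distinguished particle at each occupied site (ignoring births onto already occupied sites), which yields $\xi_t \subseteq \mathrm{supp}(\zeta_t)$ with both marginals correct; alternatively one can avoid pathwise coupling altogether and bound $P(0\in\xi^0_t)$ by the expected number of infection trails, as in the proof of Theorem \ref{brwcv}. With that fix, your argument is complete and coincides with the paper's.
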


\noindent
A little arithmetic shows
$$
\frac{1}{\sqrt{a} + \sqrt{b}} - \frac{1}{\sqrt{ab}-1} = (\sqrt{a}-1)(\sqrt{b}-1) - 2
$$
This allows us to conclude $\lambda_1 < \lambda_2$ when $a=b=6$ or $a=5$, $b=7$. Fortunately, Stacey's (1996)
result allows us to conclude $\lambda_1 < \lambda_2$ when $ab>1$. See Example 2 on page 1720. 

Though we already have one proof of Theorem \ref{ablam2}, we will give another, because the new proof will extend to trees with period 
three. To motivate the new proof, we recall the proof of the analogous lower bound $\lambda_2 > 1/2\sqrt{d}$ for $\TT_d$. 
Using the height $\ell(x)$ introduced earlier, define a weight function 
$$
W(\xi_t) = \prod_{x\in \xi_t} \rho^{\ell(x)}
$$
If we take $\rho=1/\sqrt{d}$ then when $\lambda< 1/2\sqrt{d}$ we have $EW(\xi_t) \le -\ep EW(\xi_t)$ for some $\ep>0$. From this we see
that $Ew(\xi^0_t) \le e^{-\ep t}$ and the expected number of
visits to 0 is finite. To generalize to the $(a,b)$ tree we show that 

\begin{lemma} \label{pickab}
If $\lambda < 1/(\sqrt{a}+\sqrt{b})$ then we can pick values $g(a),g(b) >0$ so that if $h(x+) = h(x) g(d(x))$ then 
$$
\bar W(\xi_t) = \prod_{x\in \xi_t} h(x)
$$
is harmonic, i.e., $(d/dt) E W(\xi_t)=0$. 
\end{lemma}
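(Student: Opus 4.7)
The plan is to look for a weight function $h$ that depends only on the level $\ell(x)$; this is consistent with the recursion $h(x+)=h(x)g(d(x))$ since $d(x)$ is a function of $\ell(x)$. Write $\alpha=g(a)$ and $\beta=g(b)$, so that along successive levels $h$ multiplies alternately by $\alpha$ (when the current level has vertices of degree $a$) and by $\beta$ (when it has vertices of degree $b$). This reduces the infinite-tree problem to a search for two positive scalars.

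Next I would compute the generator of the contact process acting on $\bar W(\xi_t)$. At an occupied $x$ the contribution to $(d/dt)E\bar W$ is $-h(x)+\lambda\sum_{y\sim x}h(y)\mathbf{1}_{\{y\notin\xi_t\}}$. Dropping the vacancy indicator (this is exactly the branching-random-walk upper bound) reduces harmonicity to $h(x)=\lambda\sum_{y\sim x}h(y)$ at every vertex. With the level-only ansatz the parent's contribution contributes a factor $1/\beta$ at a degree-$a$ vertex (since the multiplier from a degree-$b$ level to the next is $\beta$, and we are traversing it in reverse) and $1/\alpha$ at a degree-$b$ vertex. Collecting terms yields exactly two scalar conditions:
\begin{equation*}
a\alpha+1/\beta=1/\lambda \quad\text{and}\quad b\beta+1/\alpha=1/\lambda.
\end{equation*}

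The remaining task is to produce positive $\alpha,\beta$ solving this $2\times 2$ system. At the threshold value $\lambda=1/(\sqrt{a}+\sqrt{b})$ inspection suggests $\alpha=1/\sqrt{a}$, $\beta=1/\sqrt{b}$, and direct substitution confirms this. For strictly smaller $\lambda$, I would eliminate $\beta=1/(1/\lambda-a\alpha)$ from the first equation, substitute into the second, and clear denominators to get a quadratic in $\alpha$ with coefficients in $1/\lambda$, $a$, $b$. The discriminant factors cleanly as $\bigl(b-(1/\lambda+\sqrt{a})^2\bigr)\bigl(b-(1/\lambda-\sqrt{a})^2\bigr)$, and both factors are negative precisely when $1/\lambda>\sqrt{a}+\sqrt{b}$, so the discriminant is positive. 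A Vieta check (product of roots equals $1/a$, sum positive) shows both roots are positive; the smaller root lies below $1/\sqrt{a}$, which in turn is below $1/(a\lambda)$, so the corresponding $\beta$ is positive as well.

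The main obstacle is the discriminant factorization and keeping track of signs to confirm admissibility of the roots; everything else reduces to routine algebra once the ansatz $h = h(\ell(x))$ is in place. That ansatz, and the resulting finite nonlinear system, is precisely the piece of structure that will survive the generalization to period-three trees mentioned above the statement.
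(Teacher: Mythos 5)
Your proposal is correct and takes essentially the same route as the paper: reduce harmonicity of the level-multiplicative weight to the two scalar equations $a g(a)+1/g(b)=1/\lambda$ and $b g(b)+1/g(a)=1/\lambda$, eliminate one unknown to get a quadratic whose discriminant factors and is positive precisely in the regime $1/\lambda>\sqrt{a}+\sqrt{b}$, then use Vieta to get positive roots. Your extra check that the chosen root lies below $1/\sqrt{a}$ (so the paired value $\beta$ is also positive) handles the sign bookkeeping more cleanly than the paper's own discussion of $g(b)$.
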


\noindent
See Section \ref{sec:PfTh4}. After our proof was written we discovered that Pemantle \cite{Pem92}, see the text above (8) on his page 2103, had a much simpler proof. He set $g(a) = 1/\sqrt{b}$ and $g(b) = 1/\sqrt{a}$ and checked that the resulting function was superharmonic 
when $\lambda < 1/(\sqrt{a}+\sqrt{b})$. For more details see the end of our Section \ref{sec:PfTh23}.

\subsection{New results for $\lambda_2$}

Pemantle also showed, see the text below (7) on the same page,
that for a general period $k$ tree $\lambda_2 \le C/(a_1 \cdots a_k)^{1/2k}$. When $k=2$ the bound is $C/(ab)^{1/4}$.
When $a=b$.
$$
1/(\sqrt{a} + \sqrt{b}) = 1/2\sqrt{a} \qquad C/(ab)^{1/4} = C/\sqrt{a}
$$
so the bounds differ by a factor of 2. However when $a=1$ and $b=n$ 
\beq
1/(\sqrt{a} + \sqrt{b}) = 1/(1+\sqrt{n}) \qquad C/(ab)^{1/4} = C/n^{1/4}
\label{twpbds} 
\eeq
so they are different orders of magnitude.

To see which bound is closer to the truth, we will investigate this special case. 

\begin{theorem} \label{abub2}
On the $(1,n)$ tree, as $n \to \infty$ the critical value 
$$
\lambda_2 \sim  \sqrt{c(\log n)/n}\quad\hbox{where $c=1/2$}.
$$
\end{theorem}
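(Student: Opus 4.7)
The strategy is to prove matching asymptotic bounds on $\lambda_2$: for every $\epsilon>0$ and all sufficiently large $n$, $(1-\epsilon)\sqrt{(\log n)/(2n)} \le \lambda_2(n) \le (1+\epsilon)\sqrt{(\log n)/(2n)}$. The central object to analyze is the contact process on the star $K_{1,n}$ formed by a branching vertex $v$ (of degree $n+1$) and its $n$ pass-through children $u_1,\ldots,u_n$ in the $(1,n)$ tree; the logarithmic factor arises from the metastability time of the contact process on this star.

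\emph{Upper bound.} Fix $\lambda$ with $\lambda^2 n = (1/2+\epsilon)\log n$. First I would prove a quantitative survival estimate for the contact process on $K_{1,n}$: starting from $v$ infected, with probability bounded below uniformly in $n$, the process reaches a metastable phase with $\Theta(\lambda n)$ infected leaves and persists for a time $T_n$ with $\lambda^2 T_n \to \infty$. Since the contact process on the full $(1,n)$ tree dominates the one on this subgraph, the same survival holds on the tree. During this metastable phase each $u_i$ is infected for cumulative duration of order $\lambda T_n$, so each grand-branching vertex $w_i$ (the child of $u_i$) becomes infected with probability $1-\exp(-\Theta(\lambda^2 T_n))\to 1$. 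A positive fraction of the $w_i$ then initiate their own long-lived metastable stars, which embeds a supercritical branching process in the contact process on the tree of branching vertices and gives global survival. Local survival on the tree then follows by applying the same argument backward through $v$'s grandparent, so that infections return to $v$ infinitely often.

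\emph{Lower bound.} Fix $\lambda$ with $\lambda^2 n = (1/2-\epsilon)\log n$. I would show that the infection at $v$ returns only finitely often by comparison to a subcritical branching process whose ``offspring'' of an alive epoch of $v$ are the grand-branching vertices $w_i$ infected during that epoch. The mean offspring count is at most $n$ times the probability that an individual $w_i$ is ever infected during one alive epoch, which is in turn bounded by $\lambda^2$ times the expected cumulative infection time of the corresponding $u_i$. A careful analysis of the contact process on $K_{1,n}$ in this subcritical regime yields an upper bound on the expected cumulative infection time that makes the total mean offspring strictly less than $1$, giving local extinction by the standard subcritical branching comparison.

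\emph{Main obstacle.} Pinning down the constant $c=1/2$ is the delicate part. The BRW-style bound already yields $\lambda_2 \ge \lambda_\ell = 1/(1+\sqrt{n})$ by Theorem~\ref{lamell}, which is smaller than the true value by a factor of $\sqrt{(\log n)/2}$. Extracting this extra factor requires carefully accounting for the ``wasted'' infection attempts that BRW counts but the contact process's exclusion rule discards; this wastage is exactly quantified by the star metastability in the window $\lambda^2 n \sim \log n$. Achieving matching sharp constants on both sides of the $K_{1,n}$-metastability analysis is what fixes $c=1/2$.
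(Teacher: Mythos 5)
Your proposal correctly identifies the star $K_{1,n}$ and its metastability time $T_n \approx e^{\lambda^2 n}$ as the central object, but the argument has a fundamental gap: you never identify where the constant $1/2$ actually comes from, and several intermediate claims are quantitatively false at the regime $\lambda^2 n \approx \tfrac12 \log n$.

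\emph{Factual errors in the upper bound.} With $\lambda^2 n = (1/2+\epsilon)\log n$, the star persists for time $T_n \approx n^{1/2+\epsilon}$, so $\lambda^2 T_n \approx (\log n)\,n^{-1/2+\epsilon} \to 0$, not $\infty$. Consequently each grand-branching vertex $w_i$ is infected with probability roughly $\lambda^2 T_n \to 0$, not $1 - o(1)$, and the fraction of the $w_i$ that ignite tends to $0$ (though the \emph{number} is $\approx n\lambda^2 T_n \to \infty$, which is what actually keeps the outward spread supercritical). Your claim that ``a positive fraction of the $w_i$ then initiate their own long-lived metastable stars'' is therefore false. More seriously, ``applying the same argument backward'' cannot produce local survival: the outward spread enjoys the $n$-fold branching of the tree of stars, but the path back to $v$ is a \emph{single} path of stars, each step of which is pushed only with probability $\approx n^{c-1}\to 0$. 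Without the branching there is no gain to offset this decay, and the bare assertion of symmetry is exactly the missing argument.

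\emph{Where $c=1/2$ really comes from.} The paper's mechanism (Section 5.3, ``Closing the gap'') is a second-moment method on out-and-back paths of stars. Going out $m$ steps contributes a factor $\big(n\cdot n^{c-1}\big)^m = n^{cm}$ (branching $\times$ push probability), while coming back $m$ steps contributes only $\big(n^{c-1}\big)^m$; the round-trip factor is $n^{(2c-1)m}$, which is the explicit source of the threshold $c=1/2$. The matching lower bound (Section 5.1, Lemma \ref{lbcr}) dominates the process by a BRW on the tree of stars, freezes particles as they jump stars, and sums $\binom{2m}{m}\,n^m\cdot\!\big(\lambda^2 n^{(1+\epsilon)c}\big)^{2m}$ over return paths of length $2m$; the asymmetry $n^m\cdot 1^m$ (not $n^{2m}$) again encodes the fact that only outward steps branch, and the series is summable precisely when $c<1/2$. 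Your lower bound instead sets up a branching process whose offspring are the newly infected grand-branching vertices of a star epoch; that outward branching process is supercritical for every $c>0$ (this is why $\lambda_1 \ll \lambda_2$ on these trees), so its subcriticality cannot be the mechanism for local extinction, and your arithmetic only gives $c<1/2$ because of an extra spurious factor of $\lambda$ (the probability that $w_i$ is hit is at most $\lambda$ times $u_i$'s cumulative infection time, not $\lambda^2$ times it). The ``out-and-back'' bookkeeping, which your proposal replaces with a handwave and a mis-attribution of $c=1/2$ to the star metastability alone, is the key idea the proof needs and does not contain.
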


\noindent
In contrast Pemantle's best result for the constant degree $n$ tree is
$$
\frac{4 - \sqrt{8}}{2} \le \liminf_{n\to\infty} \sqrt{n} \lambda_2  
 \le \limsup_{n\to\infty} \sqrt{n} \lambda_2 \le e
$$

In historical order, the steps in the proof of Theorem \ref{abub2} are as follows. We first prove a lower bound on the persistence of the contact process on a star graph. We use the methods in Section 2 in Chatterjee and Durrett (2009) but our new reuslt is asymptotically sharp when $\sqrt{n}\lambda \to\infty$. Following that reference, we will refer to vertices of degree $n$ as stars. We next obtain lower bounds on the probability of transferring infection from one star to a neighboring star (at distance 2 on the tree), and then compare with 1-dependent oriented percolation on $\ZZ$ to prove an upper bound with $c=1+\ep$. Once this was done we found a remarkably simple upper bound on the survival time of stars that allowed us to prove a lower bound on the critical value with $c=1/2-\ep$. To close the gap between the two bounds, we replaced our comparison with the contact process on $\ZZ$ by one that uses all of the stars on the $(1,n)$ tree. There are many block constructions that compare with oriented percolation on $\ZZ$, but this is the first that know of where the comparison is with a processon a tree.

The last proof extends easily to the $(1, \ldots 1,n)$ tree. If there are $k$ 1's then the constant $c=1/2$ is replaced by $c=k/2$. 
Pemantle \cite{Pem92} has an upper bound on $\lambda_2$ that covers this case and trees with longer periods. 
To describe the application of his result to the $(1, \ldots 1,n)$ tree it is useful to draw a picture. See Figure 2.
In this example there is a path of length $j_2=1$ from $\sigma$ to each of the $v_i$ that is disjoint from the path of length 
$j_1=4$ from $\rho$ to $\sigma$. To build the infinite tree we repeatedly attach copies of the original graph to the vertices
$v_1, \ldots v_m$. The result in this case is a $(1,1,1,1,n)$ tree. Changing Pemantle's notation to match ours.

\begin{figure}[ht]
\begin{center}
\begin{picture}(160,160)
\put(30,80){\line(1,0){100}}
\put(110,80){\line(1,1){20}}
\put(110,80){\line(1,2){20}}
\put(110,80){\line(1,3){20}}
\put(110,80){\line(2,1){20}}
\put(110,80){\line(2,3){20}}
\put(110,80){\line(2,5){20}}
\put(110,80){\line(1,-1){20}}
\put(110,80){\line(1,-2){20}}
\put(110,80){\line(1,-3){20}}
\put(110,80){\line(2,-1){20}}
\put(110,80){\line(2,-3){20}}
\put(110,80){\line(2,-5){20}}
\put(28,77){$\bullet$}
\put(48,77){$\bullet$}
\put(68,77){$\bullet$}
\put(88,77){$\bullet$}
\put(108,77){$\bullet$}
\put(30,90){$\rho$}
\put(105,90){$\sigma$}
\put(135,140){$v_1$}
\put(138,75){$\vdots$}
\put(135,16){$v_n$}
\end{picture}
\caption{$(1,1,1,1,n)$ tree in Pemantle's notation.}
\end{center}
\label{fig:11n}
\end{figure}
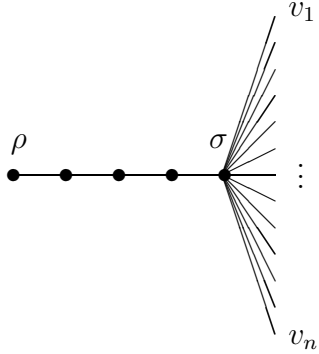

\begin{theorem} 
Let $j=j_1+j_2$ and let $r=\max\{2, \lceil j/\ln(n) \rceil\}$. There is a constant $c_4$ so that
$$
\lambda_2 \le c_4 \sqrt{r \ln(r) \ln(n) /n }
$$
\end{theorem}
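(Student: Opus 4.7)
The plan is a block construction comparing the contact process to supercritical branching on the \emph{super-tree}, whose nodes are the star vertices $\sigma$ (vertices at the height where $g(\ell) = n$) and whose edges connect $\sigma$ to each of its $n$ super-neighbors $\sigma'_1,\ldots,\sigma'_n$, each reached by a unique tree-path of length $j = j_1 + j_2$ through one of the $v_i$'s. Fix $\lambda = c_4 \sqrt{r\ln(r)\ln(n)/n}$ with $c_4$ a sufficiently large constant. I will produce, in each time window of length $T$, a block event under which one ``good'' super-star --- one with $\ge r$ nearby leaves infected --- generates at least $r$ new good super-children.

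The first ingredient is a star-survival bound of Chatterjee--Durrett (2009) type (the framework used in Theorem~\ref{abub2}): starting from a configuration in which $\sigma$ and $\ge r$ of its nearby leaves are infected, the contact process restricted to a neighborhood of $\sigma$ maintains $\ge r$ infected leaves throughout $[0,T]$ with probability $\ge 1 - e^{-r}$, where $T = \exp(c_1\lambda^2 n)$ is a large power of $n$ (note $\lambda^2 n = c_4^2\, r\ln(r)\ln(n)$). The second ingredient is a transmission bound: during $[0,T]$, for each super-neighbor $\sigma'_i$ the infection reaches $\sigma'_i$ via the length-$j$ path and subsequently builds up $\ge r$ of its own infected leaves, with probability $p_i \ge p$. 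Because the $n$ paths out of $\sigma$ are internally vertex-disjoint, the $n$ transmission attempts are essentially independent, so the expected number of good super-children is $\ge np$. The choice of $\lambda$ is tuned so that $np \ge r(1+\delta)$ for some $\delta > 0$: the factor $r\ln(r)$ in $\lambda^2$ is precisely what is needed to balance the super-polynomial star lifetime $T$ against the exponentially-in-$j$ attenuation down a single path and the combinatorial cost $\log\binom{n}{r} \approx r\log(n/r)$ of selecting which $r$ super-neighbors succeed.

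A standard stochastic-domination argument (as in Liggett (1999), Section I.2) then shows that the super-process dominates a supercritical branching process on the super-tree with offspring mean $\ge 1+\delta$; a restart argument converts survival there into local survival at the origin, yielding $\lambda \ge \lambda_2$. The main obstacle is the parameter balance in the second ingredient: the exponential attenuation down a path of length $j$ must be overcome by the product of the super-polynomial star lifetime and the $n$ parallel channels, and pinning down the precise $r\ln r$ correction requires quantitative star-survival estimates sharper than those used for Theorem~\ref{abub2}, especially in the regime $r\gg 1$ where many simultaneous successful transmissions per block are required.
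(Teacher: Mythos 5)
First, a point of orientation: the paper does not prove this theorem at all. It is Pemantle's result, quoted (with notation changed) from his 1992 paper, and the surrounding text only remarks how hard it is to extract the constant $c_4$ from his argument. So there is no in-paper proof to compare your route against; your proposal has to stand on its own, and as written it does not.

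The concrete gap is the parameter balance asserted in your second paragraph: it is not only left unproved, it fails arithmetically for the mechanism you describe exactly in the regime where the factor $r\ln r$ matters. With $\lambda = c_4\sqrt{r\ln(r)\ln(n)/n}$, a star's survival time is $\exp(\Theta(\lambda^2 n)) = \exp(\Theta(c_4^2\, r\ln(r)\ln(n)))$ (Lemma \ref{ubsurv} gives the matching upper bound on its expectation), while every vertex strictly between two consecutive stars has degree $2$, so each seeding of one of the length-$j$ connecting paths crosses it with probability at most $(C\lambda)^j = \exp\bigl(-\tfrac{j}{2}\ln(n)(1+o(1))\bigr)$. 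Hence the expected number of super-children a star ever infects, even summed over all $n$ internally disjoint channels and over its whole lifetime, has logarithm at most about $\ln n + (1+\epsilon)\lambda^2 n - \tfrac{j}{2}\ln(n)(1+o(1))$. For $j$ of order $r\ln n$ (the case that forces $r = \lceil j/\ln n\rceil$ to be large) this is $\approx c_4^2 r\ln(r)\ln(n) - \tfrac{r}{2}(\ln n)^2 \to -\infty$, so your super-tree process is (very) subcritical rather than having offspring mean $\ge r(1+\delta)$; the $r\ln r$ in $\lambda^2$ cannot ``balance'' an attenuation exponent of order $r(\ln n)^2$. Your construction does close when $j = O(c_4^2)$, i.e. it reproduces a bound of the type in Theorem \ref{abub2} for bounded $j$, but it cannot give the stated bound once $j/\ln n$ is large, so a genuinely different idea (whatever Pemantle actually exploits) is needed there; this is also why one cannot simply wave at ``Chatterjee--Durrett type'' estimates. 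A secondary problem: your first ingredient is misstated. Maintaining $\ge r$ infected leaves for time $\exp(c_1\lambda^2 n)$ starting from only $r$ infected leaves (possibly $r=2$) requires an ignition step whose failure probability is controlled by quantities like $1/\sqrt{\log n}$ and $\lambda^2 n$ (as in Lemmas \ref{ignite}, \ref{super} and \ref{survive}, which start the long-survival estimate from $\Theta(\lambda n)$ infected leaves), not by $e^{-r}$; for large $r$ the claimed error bound $e^{-r}$ is unjustified.
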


\noindent
As Pemantle says on page 2103, ``For ease of exposition, large constants are chosen with reckless abandon.'' It is not easy to 
trace through the proof to get a value of $c_4$ but it is in the hundreds if not thousands. In the example drawn $j=5$ and $n=13$,
so $r=2$. 

Our next result generalizes our conclusion for the $(1, \ldots, 1,n)$ and gives a result that is asymptotically sharp.
Given the proof of Theorem \ref{abub2} very little work is required.

\begin{theorem} \label{akub2}
Consider the $(a_1,a_2, \ldots, a_k, n)$ tree with $A = \max_i a_i \le n^{1-\ep}$ and $a_1 a_2 \cdots a_k = n^b$. 
As $n \to \infty$ the critical value $\lambda_2 \sim  \sqrt{c(\log n)/n}$ where 
$$
c = \frac{k-b}{2}.
$$
\end{theorem}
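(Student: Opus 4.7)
The plan is to follow the proof strategy of Theorem~\ref{abub2} for the $(1,n)$ case, substituting two combinatorial quantities tailored to the general setting: (i) the tree-distance between neighboring stars (the degree-$n$ vertices) becomes $k+1$, so that the star-to-star transmission rate is of order $\lambda^{k+1}$ per unit time, and (ii) each star has $\sim n^{1+b}$ descendant-star neighbors, since the number of length-$(k+1)$ descendant paths from one star through a period equals $n\cdot a_1\cdots a_k = n^{1+b}$. The hypothesis $A = \max_i a_i \le n^{1-\ep}$ plays the same technical role as ``$a_i=1$'' did in Theorem~\ref{abub2}: it forces each intermediate layer to contribute only a $(1+o(1))$ multiplicative factor both to the star persistence time and to the star-to-star transmission rate, so that $T_n = \exp((1+o(1))n\lambda^2)$ and the transmission rate is $\Theta(\lambda^{k+1})$.

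For the upper bound $\lambda_2 \le \sqrt{((k-b)/2+\ep)(\log n)/n}$, I would re-run the final ``all stars'' block construction from the $(1,n)$ proof. The lower bound $T_n \ge \exp((1-o(1))n\lambda^2)$ from Chatterjee-Durrett (2009) transplants verbatim, since it uses only the $n$ immediate leaves of the star. The transmission estimate generalizes easily: the probability of a successful burn-through along the unique length-$(k+1)$ chain of intermediate non-star vertices is $\ge c_1\lambda^{k+1}$ per unit time, uniformly in the intermediate branchings. Comparing the coarse-grained process with the contact process on the star-tree (whose stars have $(1+o(1))n^{1+b}$ descendant neighbors), Theorem~\ref{brwcv} yields a local-survival criterion of the form $\lambda^{k+1}T_n \ge c_2\, n^{-(1+b)/2}$. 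Inserting $\lambda^2 = c(\log n)/n$ and $T_n = n^c$ collapses this to $c \ge (k-b)/2$.

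For the lower bound $\lambda_2 \ge \sqrt{((k-b)/2-\ep)(\log n)/n}$, I would use the matching upper bound $T_n \le \exp((1+o(1))n\lambda^2)$ (the ``remarkably simple'' bound in the $(1,n)$ proof adapts directly under $A \le n^{1-\ep}$) together with the extinction side of the same star-tree comparison: if $\lambda^{k+1}T_n \cdot n^{(1+b)/2} \to 0$, then the rescaled inter-star process is subcritical for local survival on the star-tree, hence by Theorem~\ref{brwcv} the expected number of visits of the infection to the origin is finite. Substituting parameters again yields the threshold $c \le (k-b)/2$.

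The main obstacle is verifying the ``cleanness'' assertion on $T_n$: the block surrounding a star---containing the star, its $n$ immediate leaves, and up to $\Theta(n A^{k-1})$ deeper non-star descendants---has persistence time asymptotically equal to that of the bare star on $n$ leaves. The key estimate bounds the effective infection density at intermediate layer $j$ by the recursion (density at layer $j$) $\le \lambda\cdot a_j\cdot$ (density at layer $j+1$), producing a geometric series with ratio at most $A\lambda^2 \le n^{1-\ep}\cdot c(\log n)/n = c(\log n)/n^{\ep} = o(1)$. The series is dominated by its first term (the $n$ immediate leaves), recovering the bare-star formula for $T_n$; an analogous estimate shows that the star-to-star transmission rate is genuinely $\Theta(\lambda^{k+1})$ rather than being boosted by the intermediate branchings. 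This is the one and only place where the hypothesis $A \le n^{1-\ep}$ is used.
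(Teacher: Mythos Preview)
Your proposal is correct and follows the same overall strategy as the paper: star persistence $T_n = \exp((1+o(1))\lambda^2 n)$, star-to-star transmission of order $\lambda^{k+1}$, and $n^{1+b}$ neighboring stars, combined into the threshold $(\lambda^{k+1}T_n)^2\, n^{1+b}\approx 1$. The two arguments differ in how the lower bound on $\lambda_2$ is sliced. You enlarge the star to a ``block'' containing all intermediate non-star descendants and then must verify that the block's persistence time is still $\exp((1+o(1))\lambda^2 n)$ --- the piece you correctly flag as the main obstacle. The paper avoids this entirely: it runs the contact process only on the bare star $S_x$ (center plus its $n$ immediate neighbors), freezes any particle that escapes $S_x$, and then bounds the expected number of frozen particles that reach a given neighboring star $y$ by a dual branching-random-walk path count. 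A path of length $k+1+2m$ from $y$ back to $x$ has at most $\binom{2m+k+1}{m}A^m$ realizations, giving a geometric series with ratio $O(A\lambda^2)=O(n^{-\ep}\log n)$; this is exactly where $A\le n^{1-\ep}$ enters, and it yields the clean transmission bound $C\lambda^{k+1}$ without ever touching block persistence. Your decomposition works, but the paper's is tidier and sidesteps the extra lemma you would need. One minor point: for the upper bound on $\lambda_2$ you invoke Theorem~\ref{brwcv}, which concerns branching random walk rather than the contact process; the paper instead reuses the out-and-back second-moment argument from the $(1,n)$ case directly on the star tree, which is the honest tool here (and is, of course, what underlies Theorem~\ref{brwcv} anyway).
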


\noindent
Note that within this class the critical value depends only on the maximum degree, as Pemantle says at the bottom of page 2103.
When $b=0$ and $k=1$ this reduces to the result for the $(1,n)$ tree. Since $A \le n^{1-\ep}$, $b<k$ and the constant is positive.
Due to the constraint $A \le n^{1-\ep}$ we cannot take $b=k$ in the theorem. If we did then we would have the $k$-tree which 
has $\lambda_2 \le C/\sqrt{k}$ so $c=0$.

\subsection{Period three trees}

Suppose the degrees are $a,b,c$. Our calculations will lead to cubic equations, which make the formulas
ugly, but it is remarkable that the answers depend only on $a+b+c$ and $abc$. To solve the cubic and get an explicit formula, we can use
Cardano's formula. A solution of $x^3 + px + q =0$ is given by 
\beq
\left( -\frac{q}{2} + \sqrt{ \frac{q^2}{4} + \frac{p^3}{27} } \right)^{1/3} 
 +\left( -\frac{q}{2} - \sqrt{ \frac{q^2}{4} + \frac{p^3}{27} } \right)^{1/3} 
\label{cardano}\\
\eeq

Starting with the first critical value, $\lambda_g = 1/\Lambda$ where $\Lambda$ is the max eigenvalue of 
$$
A = \begin{pmatrix} 0 & a & 1 \\ 1 & 0 & b \\ c & 1 & 0 \end{pmatrix}
$$
To see this note that $A^n_{ij}$ gives the number of paths of length $n$ from $i$ to $j$ and by the Perron-Frobenius theorem
implies that $A^n_{ij} \sim r_i \Lambda^n \ell_i$ where $\ell$ and $r$ are left and right eigenvectors associated to $\Lambda$. 
The eigenvalues are roots of $x^3 -(a+b+c)x -(abc+1)=0$, so the maximum eigenvalue can be found using \eqref{cardano}. 
As explained in Section \ref{sec:per3} we know this is the correct root since the maximum eigenvalue is simple and 
in some cases the other two roots are complex.

\begin{theorem}\label{lam13}
$$
\frac{1}{\Lambda} = \lambda_g \le \lambda_1 \le \frac{1}{(abc)^{1/3} - 1 }
$$
\end{theorem}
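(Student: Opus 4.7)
For $\lambda_g = 1/\Lambda$, I would track the BRW's expected particle counts by type. Let $N_i(t)$ be the sum of $E[\zeta_t(x)]$ over all vertices $x$ with $\ell(x) \equiv i \pmod 3$. By linearity and the birth/death structure,
\[
\frac{dN}{dt} = (\lambda A^T - I)\, N,
\]
since a type-$i$ particle gives birth at rate $\lambda$ onto each of its $A_{ij}$ type-$j$ neighbors. The matrix $A$ is nonnegative and irreducible, so Perron--Frobenius yields a simple dominant eigenvalue $\Lambda>0$ with positive eigenvector, and $\|N(t)\|$ grows like $e^{(\lambda\Lambda-1)t}$. BRW is a branching process, so positive mean growth is equivalent to positive survival probability, giving $\lambda_g = 1/\Lambda$. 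The inequality $\lambda_g \le \lambda_1$ is the standard coupling $\xi_t \subseteq \zeta_t$ in the Harris graphical representation: CP is dominated by BRW, so CP survival forces BRW survival.

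For the upper bound, I would embed a supercritical multi-type branching process $\tilde\xi \subseteq \xi$ by retaining only ``first downward infections.'' Starting from the infected root, each newly-infected vertex $v$ of type $i$ lives for $T \sim \mathrm{Exp}(1)$, and during this lifetime each of its $g(i)$ children (with $(g(0),g(1),g(2)) = (a,b,c)$) is infected for the first time from $v$ with probability $1 - e^{-\lambda T}$ conditional on $T$. Averaging gives $\lambda/(1+\lambda)$, so the mean offspring matrix is
\[
M \;=\; \frac{\lambda}{1+\lambda}\, A', \qquad A' \;=\; \begin{pmatrix} 0 & a & 0 \\ 0 & 0 & b \\ c & 0 & 0 \end{pmatrix},
\]
which is the children-only part of $A$. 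A direct calculation gives $(A')^3 = abc\, I$, so $M$ has spectral radius $(abc)^{1/3}\lambda/(1+\lambda)$, and standard multi-type branching theory then gives that $\tilde\xi$ survives with positive probability exactly when $\lambda > 1/((abc)^{1/3}-1)$. Since $\tilde\xi \subseteq \xi$ by construction, this forces $\lambda_1 \le 1/((abc)^{1/3}-1)$.

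The main obstacle is verifying that $\tilde\xi$ really is a multi-type branching process. The subtlety is that the lifetime $T$ of each parent is shared across its attempts to infect its different children, so those infection events are not unconditionally independent. Conditional on $T$, however, they are independent (governed by disjoint rate-$\lambda$ Poisson processes on distinct edges), and the sub-processes launched from distinct infected children use disjoint portions of the graphical representation, so the branching property indeed holds.
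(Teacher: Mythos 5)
Your proposal is correct and follows essentially the same route as the paper: the identification $\lambda_g=1/\Lambda$ rests on Perron--Frobenius for $A$ controlling the mean of the branching random walk (the paper packages this as path counting $A^n_{ij}\sim r_i\Lambda^n\ell_j$ rather than your mean ODE, but it is the same idea), and your downward, first-infection embedded branching process with per-child success probability $\lambda/(1+\lambda)$ is exactly the paper's argument for $\lambda_1\le 1/((abc)^{1/3}-1)$, with your observation $(A')^3=abc\,I$ playing the role of the paper's product $\frac{a\lambda}{1+\lambda}\cdot\frac{b\lambda}{1+\lambda}\cdot\frac{c\lambda}{1+\lambda}>1$.
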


\noindent
The proof of the second result is the same as the proof of 
the second result in Theorem \ref{l1bds}. 
As in the case of $(a,b)$ trees considered in Theorem \ref{l1bds} the bounds are not very good
$$
\begin{matrix}
a,b,c & \lambda_g \ge & \lambda_1 \le \\
2,3,4 & 0.1711 & 0.5306 \\
3,4,5 & 0.1270 & 0.3430 \\
4,6,8 & 0.0865 & 0.2097 \\
6,8,10 & 0.0638 & 0.1464
\end{matrix}
$$

To find a lower bound on $\lambda_\ell \le \lambda_2$ we will generalize Lemma \ref{pickab} to period three trees.
To do this we need to find the solution of 
\begin{align*}
& a g(b) + 1/g(a) = 1/\lambda \\
& b g(c) + 1/g(b) = 1/\lambda \\
& c g(a) + 1/g(c) = 1/\lambda 
\end{align*}
Our calculations, see Section \ref{sec:per3}, show that if $\lambda < \lambda_0$ the smallest positive root of 
$$
\frac{1}{\lambda^6} - \frac{2(a+b+c)}{\lambda^4} + \frac{(a+b+c)^2}{\lambda^2} - 4abc  = 0 
$$
then the solutions $g(a), g(b), g(c)$ are positive, so we can define a harmonic weight function. The proof of this is just calculus, but we would be surprised if the reader can find it without looking at our solution.

Let $x = 1/\lambda^2$ to convert the last cubic into
$$
x^3 - 2(a+b+c)x^2 + (a+b+c)^2x - 4abc  = 0 
$$
Using some elementary analysis, we shows that this cubic has three real roots and that if $x > x_0(a,b,c)$, the largest root, then $g(a), g(b), g(c)>0$.
It follows that

\begin{theorem} \label{lam23}
$$
\frac{1}{x_0(a,b,c)^{1/2}} \le \lambda_\ell \le \lambda_2
$$
\end{theorem}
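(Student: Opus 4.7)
The inequality $\lambda_\ell \le \lambda_2$ is immediate from the standard coupling of the contact process with branching random walk, so the content of the theorem is the lower bound $\lambda_\ell \ge 1/\sqrt{x_0(a,b,c)}$. My plan is to extend the weight-function construction used for Lemma \ref{pickab} from period two to period three. Fix $\lambda < 1/\sqrt{x_0(a,b,c)}$. I would pick three positive constants $g(a), g(b), g(c)$ (to be determined) and build a strictly positive weight $h$ on the tree by walking down each parent-child edge and multiplying by the appropriate $g$-value. Writing out the $\lambda$-harmonicity condition $-h(y) + \lambda \sum_{z \sim y} h(z) = 0$ at a vertex of each of the three types -- each such vertex has one parent of the cyclically preceding type and the appropriate number of children of the cyclically following type -- produces exactly the stated system
\begin{align*}
a g(b) + 1/g(a) &= 1/\lambda, \\
b g(c) + 1/g(b) &= 1/\lambda, \\
c g(a) + 1/g(c) &= 1/\lambda.
\end{align*}
Once positive $g$'s are found, the additive weight $W(\zeta_t) = \sum_x \zeta_t(x) h(x)$ is a martingale for the branching random walk, and replacing $\lambda$ by a slightly smaller $\lambda'$ turns $W$ into a strict supermartingale with decay rate $(\lambda-\lambda')/\lambda$. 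The Markov bound $P(0 \in \zeta_t^0) \le E[\zeta_t^0(0)] \le E[W(\zeta_t^0)]/h(0)$ then gives exponential decay and hence $\lambda' \le \lambda_\ell$; letting $\lambda' \uparrow 1/\sqrt{x_0}$ yields the theorem.

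The algebraic heart of the proof is to determine for which $\lambda$ this system admits a positive solution. I plan to eliminate variables sequentially: the first equation expresses $g(b)$ as an explicit rational function of $g(a)$ and $\lambda$, the second then determines $g(c)$ as a rational function of $g(a)$ and $\lambda$, and substituting both into the third after clearing denominators leaves a single polynomial relation between $g(a)$ and $\lambda$. After the change of variable $x = 1/\lambda^2$, careful bookkeeping should collapse the solvability criterion to the cubic $x^3 - 2(a+b+c)x^2 + (a+b+c)^2 x - 4abc = 0$. A sign analysis at $x=0$ (where the value is $-4abc < 0$), at the two critical points of the cubic (computable from $a+b+c$ alone), and at $+\infty$ shows the cubic has three real roots, the largest of which is the $x_0(a,b,c)$ in the theorem.

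The most delicate step is verifying that, for $x > x_0$, the derived values $g(a), g(b), g(c)$ are simultaneously strictly positive and not merely real. I would argue this by following a continuous branch of solutions: for very small $\lambda$ (very large $x$) the dominant balance in the three equations forces positive asymptotics for all three $g$'s, and by continuity this positivity persists as $x$ decreases toward $x_0$. A short computation should show that the first $x$ at which any one of $g(a), g(b), g(c)$ hits zero coincides with the root $x_0$, so that a positive harmonic weight exists for every $\lambda < 1/\sqrt{x_0(a,b,c)}$; the supermartingale argument of the first paragraph then completes the proof.
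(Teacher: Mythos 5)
Your overall strategy matches the paper's: build a positive $\lambda$-harmonic weight by solving the cyclic three-equation system for $g(a),g(b),g(c)$, reduce (after the substitution $x=1/\lambda^2$) to the cubic $D(x)=x^3-2(a+b+c)x^2+(a+b+c)^2x-4abc$, and then use the supermartingale $W(\zeta_t)=\sum_x\zeta_t(x)h(x)$ with a slightly smaller rate $\lambda'$ to get exponential decay of the expected occupation of the root. Where you diverge is in the two auxiliary lemmas. To show $D$ has three real roots, the paper computes the cubic discriminant and invokes AM--GM together with the rearrangement inequality; your proposal to evaluate $D$ at its critical points $(a+b+c)/3$ and $a+b+c$ gives $D\bigl((a+b+c)/3\bigr)=\tfrac{4}{27}(a+b+c)^3-4abc\ge 0$ by AM--GM and $D(a+b+c)=-4abc<0$, a shorter and more transparent route to the required sign pattern. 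To show the $g$'s are positive for $x>x_0$, the paper eliminates variables down to a single quadratic $\alpha g(a)^2+\beta g(a)+\gamma=0$ with $\alpha=c(1/\lambda^2-a)$, $\beta=(a+b-c)/\lambda-1/\lambda^3$, $\gamma=1/\lambda^2-b$, and checks directly that $x>x_0>a+b+c$ forces $\alpha,\gamma>0$ and $\beta<0$, so \emph{both} roots are positive; cyclic symmetry then handles $g(b),g(c)$. Your continuity-from-$x=\infty$ argument is a workable alternative, but the phrase ``hits zero'' misstates what happens at $x_0$: none of the $g$'s vanishes for $x>x_0$ (indeed $g(a)=0$ would force $\gamma=0$, i.e.\ $x=b<a+b+c<x_0$); rather the discriminant of the quadratic vanishes there and the two roots coalesce, losing realness for $x<x_0$. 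Once you replace that phrase with the observations that realness holds exactly when $D(x)\ge 0$ and that the $g$'s cannot vanish for $x>a+b+c$, your branch-tracking argument closes; the paper's direct sign check avoids tracking a branch and has the side benefit of showing both quadratic roots are admissible.
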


Cardano's formula can be used to find a formula for $x_0(a,b,c)$. However for concrete examples
it is easier to just solve the cubic numerically. To illustrate Theorem \ref{lam23}, consider 
$$
\begin{matrix}
a,b,c & x_0(a,b,c) & \hbox{lower bound} \\
2,3,4 & 11.847 & 0.2905 \\
3,4,5 & 15.887 & 0.2509 \\
4,6,8 & 23.693 & 0.2054 \\
6,8,10 & 31.774 & 0.1774
\end{matrix}
$$ 
Since Pemantle's result contains an unspecified constant, we cannot compare with his result.
Only in the last case is the lower bound on $\lambda_2$ larger than the upper bound on $\lambda_1$.

The remainder of the proof is devoted to proofs. Theorem 1 is proved in Section 2, Theorems 2 and 3 in Section 3, Theorem 4 in Section 4,
Theorems 5 and 8 in Section 5, Theorems 9 and 10 in Section 6.

\clearp

\section{Proof of Theorem \ref{l1bds}} \label{sec:PfTh1}

\begin{proof} 
The number of paths of length $2n$ on the tree is $(a+1)^n(b+1)^n$. The second ingredient is

\begin{lemma} \label{Etrails}
The expected number of branching random walk infection paths of length $n$ corresponding to a fixed path is $\lambda^n$.
\end{lemma}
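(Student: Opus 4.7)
The plan is to proceed by induction on $n$, using the branching property of the BRW. Fix a path $\gamma = (v_0, v_1, \ldots, v_n)$ whose consecutive vertices are neighbors in the tree, and interpret an \emph{infection path along $\gamma$} as a sequence of particles $(p_0, p_1, \ldots, p_n)$ in which $p_0$ is the initial particle placed at $v_0$ and, for each $i \ge 1$, the particle $p_i$ is a child born from $p_{i-1}$ at site $v_i$. Let $N_n(\gamma)$ denote the number of such sequences; the lemma asserts $E[N_n(\gamma)] = \lambda^n$.

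The base case $n = 0$ is immediate, since the only admissible sequence is $(p_0)$ itself, so $N_0(\gamma) = 1$. For the inductive step, let $\tau_0$ be the exponential(1) lifetime of $p_0$ and let $K$ be the number of children that $p_0$ produces at $v_1$ during $[0,\tau_0]$. Because births of $p_0$ onto the specific neighbor $v_1$ occur as a rate-$\lambda$ Poisson process throughout $p_0$'s life, $E[K \mid \tau_0] = \lambda \tau_0$, and so $E[K] = \lambda$. By the branching/Markov property of the BRW, the genealogical subtrees rooted at the $K$ children at $v_1$ are independent copies of the BRW started from a single particle at $v_1$, and are jointly independent of $K$.

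Let $M_j$ count the infection paths along the shifted path $\gamma' = (v_1, \ldots, v_n)$ inside the $j$-th such subtree. Then the $M_j$ are i.i.d., independent of $K$, and by the induction hypothesis applied to $\gamma'$ we have $E[M_1] = \lambda^{n-1}$. Since $N_n(\gamma) = \sum_{j=1}^{K} M_j$, Wald's identity gives
$$
E[N_n(\gamma)] \;=\; E[K]\, E[M_1] \;=\; \lambda \cdot \lambda^{n-1} \;=\; \lambda^n,
$$
completing the induction. There is no serious obstacle: the only care needed is in pinning down the definition of an infection path so that each step corresponds to one birth event at the prescribed neighbor, and in invoking the standard branching-property decoupling of the subtrees rooted at the distinct offspring of $p_0$ at $v_1$.
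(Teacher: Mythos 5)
Your proof is correct and rests on the same core computation as the paper's: the expected number of births from a particle onto a fixed neighbor during its exponential(1) lifetime is $\int_0^\infty e^{-s}\lambda\,ds = \lambda$, and the expectation factors step by step along the path. The paper asserts the factorization directly from independence of the infection events, while you make it explicit via induction and Wald's identity, but the argument is essentially the same.
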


\begin{proof}
Let $x_0, \ldots x_n$ be the sequence of sites. If we condition on the lifetime of 
the individual at $x_i$ we see that the mean number of infections from 
$x_i$ to $x_{i+1}$ is
$$
\int_0^\infty e^{-s} \lambda \, ds = \lambda
$$
Since the infection events are independent the desired result follows.
\end{proof}

\noindent
If $\lambda < \sqrt{(a+1)(b+1)}$ then the expected number of branching random walk infection paths of even length is
$$
\sum_{m=1}^{\infty} (a+1)^m (b+1)^m \lambda ^{2m} < \infty
$$
so the process dies out globally.

To prove the second result, we only allow infections that go away from the root. Each vertex will be occupied at most once,
so the number of infected sites at time $t$ is branching process. 
The probability an individual infects a neighbor before they become healthy
is $\lambda/(\lambda+1)$, so if 
$$
\frac{a\lambda}{1+\lambda} \cdot \frac{b\lambda}{1+\lambda}  > 1
$$ 
the branching process of infections on the oriented tree is supercritical, and infection survives globally. 
Rearranging the formula, the condition becomes $\lambda > 1/(\sqrt{ab}-1)$. 
\end{proof}

\clearp

\section{Proof of Theorems \ref{brwcv} and  \ref{lamell}} \label{sec:PfTh23}

\begin{proof} [Proof of Theorem \ref{brwcv}.] It follows from Lemma \ref{subadd} that $M_0(0,2n) \le M_0^{2n}$. If $\lambda < 1/M_0$
then the expected number of infection trails of any length is
$$
\sum_{n=1}^\infty \lambda^{2n}M_0(0,2n) < \infty
$$
so $\lambda < \lambda_\ell$.

Suppose $\lambda > 1/M_0$. Due to Lemma \ref{subadd} if $n$ is large enough $M_0(0,2n) \lambda^{2n} > 1$.
Let $X_n$ be the number of infection trails of length $2n$ from 0 to 0. $X_{kn}$ dominates a supercritical
branching process, so with positive probability the root will be visited infinitely many times. 
\end{proof} 

\mn
{\bf Computation of $M_0$.} We begin with the homogeneous case in which $a=b=d$. In order for a path that starts at the root 0 to end 
at a vertex with $\ell(x)=0$ there needs to be an equal number of up an down steps. There are $d$ choices when we go up but
only 1 when we go down, so the number of paths of length $2n$ starting from 0 to ending at a vertex with $\ell(x)=0$ is
$$
\binom{2n}{n} d^n
$$
A path $x_0=0, x_1, x_2, \ldots x_{2n}$ with $\ell(x_{2n})=0$ that has $\ell(x_i) \ge 0$ is guaranteed to end at 0. Call these good paths.
If we have an arbitrary path $y_0=0, y_1, y_2, \ldots y_{2n}$ with $\ell(y_{2n})=0$ rewrite is as a sequence $w_1, \ldots w_{2n}$ of $n$ $U$'s for up 
and $n$ $D$'s for down. Let $v = \min\{ \ell(y_i) : 0 \le i \le 2n \}$ and $k = \min\{ j : \ell(y_j) = v \}$. Consider a new path with $U,D$ sequence 
$w_{k+1}, \ldots w_{2n}, w_1 \ldots w_k$. If $v=0$, $k=0$ and the path does not change. However if $v<0$ this is a new path 
 $z_0=0, z_1, z_2, \ldots z_{2n}$ with $\min\ell(z_i) \ge 0$.

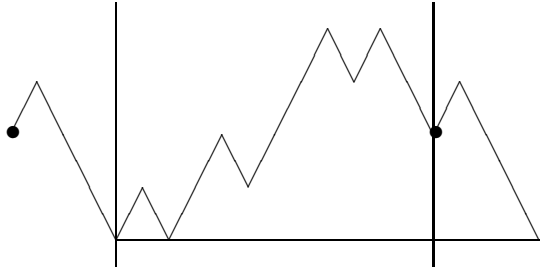
\begin{figure}[ht]
\begin{center}
\begin{picture}(270,120)
\put(28,58){$\bullet$}
\put(30,60){\line(1,2){10}}
\put(40,80){\line(1,-2){10}}
\put(50,60){\line(1,-2){10}}
\put(60,40){\line(1,-2){10}}
\put(70,20){\line(1,2){10}}
\put(70,10){\line(0,1){100}}
\put(80,40){\line(1,-2){10}}
\put(90,20){\line(1,2){10}}
\put(100,40){\line(1,2){10}}
\put(110,60){\line(1,-2){10}}
\put(120,40){\line(1,2){10}}
\put(130,60){\line(1,2){10}}
\put(140,80){\line(1,2){10}}
\put(150,100){\line(1,-2){10}}
\put(160,80){\line(1,2){10}}
\put(170,100){\line(1,-2){10}}
\put(180,80){\line(1,-2){10}}
\put(190,60){\line(1,2){10}}
\put(188,58){$\bullet$}
\put(190,10){\line(0,1){100}}
\put(200,80){\line(1,-2){10}}
\put(210,60){\line(1,-2){10}}
\put(220,40){\line(1,-2){10}}
\put(70,20){\line(1,0){160}}
\end{picture}
\caption{Picture of the path transformation. The original path connected the two black dots.
The front was cut off at the vertical line and added at the end.}
\end{center}
\label{fig:ell}
\end{figure}

\noindent
 Any good path created in this way can come from at most $2n$ arbitrary paths so the 
number of good paths is
 $$
\ge \frac{1}{2n} \binom{2n}{n} d^n
$$

\medskip
If we take two steps starting from a vertex with $a$ children then we return to a vertex with $a$ children.
We can go up twice in $ab$ ways, go up and then down in $a$ ways, go down and then up in $b$ ways,
 or go down twice in 1 way. Our first step is to count the paths of
length $2n$ that start from 0 and return to a vertex $x$ with $\ell(x)=0$. Using our result about pairs of steps, the number of paths is
the sum over $m$ of
$$
\pi_n(m) = \frac{n!}{m!(n-2m)!m!} 1^m (a+b)^{n-2m} (ab)^{m}
$$
Using Stirling's formula and dropping the $\sqrt{2\pi n}$ terms this is 
\begin{align*}
& \approx \frac{n^n}{m^{2m}(n-2m)^{n-2m}} (a+b)^{n-2m} (ab)^m \\
& = (m/n)^{-2m} (1- 2m/n)^{-(n-2m)} (a+b)^{n-2m} (ab)^m
\end{align*}
Taking the $n$th root of the last quantity, letting $x=m/n$, and taking logarithms
$$
(1/n) \log \pi_n(xn) = -2x\log x -(1-2x) \log(1-2x) + (1-2x) \log(a+b) + x \log(ab)
$$
To maximize this we take the derivative with respect to $x$ to get
\begin{align*}
& -2\log x -\frac{2x}{x} + 2\log(1-2x) - \frac{1-2x}{1-2x} \cdot (-2) - 2 \log(a+b) + \log(ab) \\
& = -2 \log(x) + 2\log(1-2x) -2 \log(a+b) + 2\log(\sqrt{ab})
\end{align*}
Setting the last quantity $=0$ we want $\sqrt{ab}(1-2x_0) = (a+b)x_0 $ or
$$
x_0 = \frac{\sqrt{ab}}{(a+b) + 2\sqrt{ab}} \qquad 1-2x_0 = \frac{a+b}{(a+b) + 2\sqrt{ab}}
$$
For this value of $x$
$$
(1/n)\log \pi_n(nx_0)  = -2x_0 \log(x_0/\sqrt{ab}) - (1-2x_0) \log( (1-2x)/(a+b))
$$
Writing $D=  \log( 1/(a+b + 2\sqrt{ab}))$ this is
$$
= -\frac{2\sqrt{ab}}{D} \log( 1/D) -  \frac{a+b}{D}  \log( 1/D)
$$
so we have 
$$
(1/2n)\log\pi_n(nx_0) = \frac{1}{2}\log(a+b + 2\sqrt{ab})
$$
Undoing the logarithm
$$
\pi_n(nx_0)^{1/2n} \approx (a+b+2\sqrt{ab})^{1/2} = \sqrt{a}+\sqrt{b}
$$
since $(\sqrt{a}+\sqrt{b})^2 = a + 2\sqrt{ab} + b$.

To finish up we note that $\sum_m \pi_n(m) \le n \pi_n(mx_0)$, i.e. the exponential order of magnitude of the sum is the same as the largest term. 
As in the homogeneous case we want to restrict our attention to paths that have $\ell(x_i) \ge 0$ to guarantee that they return to the root.
Let $\pi^0_n(m)$ be the number of such paths of length $2n$ with $m$ up steps.
We will use the same trick of permuting the steps. This time the steps are encoded by $2$, 0, and $-2$. However, a 0 step can consist of an up and a down. 
To assure this does not cause a problem, we suppose the first step in our path of length $2n$ is a $2$ and the last is a $-2$. Having done this we can safely 
permute the middle $2n-2$ steps to generate a path that stays $\ge 0$. We conclude that
$$
\pi^0_n(nx_0) \ge \pi_{n-1}(nx_0)/(n-1)
$$
and the proof is complete.

\clearp

\section{Proof of Theorem \ref{ablam2}} \label{sec:PfTh4}

To begin we recall the proof of $\lambda_2 \ge 1/2\sqrt{d}$ for the homogeneous tree.

\begin{proof} Let $\ell(x)$ be the ``height'' function on the tree defined in the introduction.
Let $\rho>0$ to be chosen later and let $W(A) = \sum_{x\in A} \rho^{\ell(x)}$. 
\begin{align*}
\frac{d}{dt} EW(\xi_t) & =  E\left( \sum_{x\in\xi_t} \left[ \lambda \sum_{y \not\in \xi_t: y\sim x} \rho^{\ell(y)} - \rho^{\ell(x)} \right] \right) \\
&\le E\left( \sum_{x\in\xi_t} \left[ \lambda \sum_{y : y\sim x} \rho^{\ell(y)} - \rho^{\ell(x)} \right] \right) \\
& = E\sum_{x\in \xi_t} [\lambda(d\rho + \rho^{-1}) - 1] \rho^{\ell(x)} = [\lambda(d\rho + \rho^{-1}) - 1] EW(\xi_t)
\end{align*}
If we take $\rho=1/\sqrt{d}$ then $d\rho+ \rho^{-1} = 2\sqrt{d}$ so if $\lambda< 1/2\sqrt{d}$ then $EW(\xi_t) \le -\ep EW(\xi_t)$ and the result
follows.
\end{proof}

To generalize the result suppose that we have a function $h \ge 0$ defined on the tree so that
\beq
\lambda[ d(x) h(x+) + h(x-) ] -h(x) = 0
\label{heq}
\eeq
where $x+$ is any of the vertices with $\ell(x+) = \ell(x)+1$, and $x-$ is the vertex with $\ell(x-)=\ell(x)-1$. 
To simplify the equation we suppose that $h(x+) = g(x+) h(x)$. 
In this case \eqref{heq} can be written as 
\begin{align*}
&\lambda\left[ d(x) \frac{h(x+)}{h(x)} + \frac{h(x-)}{h(x)} \right]  =  1 \\
&d(x) g(x+) + \frac{1}{g(x)} = 1/\lambda 
\end{align*}

\begin{proof}
If we let $g(a)$ and $g(b)$ be the $g$ values for vertices of degrees $m$ and $n$ then 
\begin{align*}
& a g(b) = \frac{1}{\lambda} - \frac{1}{g(a)} \\
& b g(a) = \frac{1}{\lambda} - \frac{1}{g(b)}
\end{align*}
To begin to solve the equation multiply the first equation by $bg(a)$ and subtract the second equation multiplied by $ag(b)$ to get
$$
\frac{bg(a) - ag(b)}{\lambda} - (b-a) = 0 \quad\hbox{or}\quad ag(b) = b g(a) + \lambda (a-b)
$$
Plugging this into the first equation gives
$$
b g(a) + (a-b) \lambda - 1/\lambda + 1/g(a) = 0 \quad\hbox{or}\quad  b g(a)^2 - ((b-a)\lambda + 1/\lambda) g(a) + 1 = 0
$$  
Using the formula for solutions of the quadratic equation the solutions are
$$
g(a) = \frac{ (b-a)\lambda + 1/\lambda \pm \sqrt{((b-a)\lambda + 1/\lambda)^2 - 4b}}{2b}
$$
In order for the root to be real we need $(b-a)\lambda + 1/\lambda > 2\sqrt{b}$. This = 0 when
$(b-a)\lambda^2 - 2\sqrt{b} \lambda + 1 = 0$, so
$$
\lambda = \frac{2\sqrt{b} \pm \sqrt{4b-4(b-a)}}{2(b-a)} = \frac{\sqrt{b} \pm \sqrt{a}}{b-a}
$$
The relevant solution for us is $\lambda = (\sqrt{b}-\sqrt{a})/(b-a) = 1/(\sqrt{b}+\sqrt{a}) \equiv \lambda_0$

Suppose $b>a$. $(b-a)\lambda + 1/\lambda \ge 0$ when $\lambda<\lambda_0$ so the roots for $g(a)$ are real both are positive. 
Note that when $\lambda = \lambda_0$
$$
g(a) = \frac{1}{2b} \left( \frac{(b-a)}{\sqrt{b}+\sqrt{a}} + \sqrt{b}+\sqrt{a} \right)  = \frac{1}{\sqrt{b}}
$$
By symmetry 
$$
g(b) = \frac{ (a-b)\lambda + 1/\lambda \pm \sqrt{((a-b)\lambda + 1/\lambda)^2 - 4a}}{2a}
$$
When $\lambda = \lambda_0$
$$
\frac{a-b}{\sqrt{b}+\sqrt{a}}{b-a} + \sqrt{a} + \sqrt{b}= 2\sqrt{a}
$$
so $g(b) = 1/\sqrt{a}$.
As $\lambda$ decreases $(a-b)\lambda$ become less negative while $1/\lambda$ becomes a larger positive number so $g(b)<0$.
Since the existence of positive solutions for $g(a)$ and $g(b)$ implies the process dies out we have proved Theorem \ref{ablam2}.
\end{proof}

To describe Pemantle's proof now, we note that to get a lower bound it is enough to have
$$
\lambda\left[ d(x) g(x+1) + \frac{1}{g(x)} \right] \le 1
$$
In the $a,b$ case this is 
\begin{align*}
a g(b) + \frac{1}{g(a)} \le \frac{1}{\lambda} \\
b g(a) + \frac{1}{g(b)} \le \frac{1}{\lambda}
\end{align*}
Now take $g(b) = 1/\sqrt{a}$ and $g(a) = 1/\sqrt{b}$ (the values we have when $\lambda=\lambda_0$) to prove
$$
\lambda_2 \ge \frac{1}{\sqrt{a}+\sqrt{b}}
$$ 

\clearp

\section{Proofs of Theorem \ref{abub2} and \ref{akub2}} \label{sec:PfTh5}

\subsection{Lower bound}

We denote the state of the contact process on the star graph by $j,k$ where $j$ is the number of infected leaves
and $k=1,0$ depending on whether the center is infected or not. Let $E_{j,k}$ be the expected vlaue for the Markov chain
starting from $j,k$. 
Let $T_{0,0}$ be the time until the state is $0,0$, i.e., infection dies out. Let $T_j$ be the first time there are $j$ infected leaves.

\begin{lemma}\label{ubsurv}
Let $K = \lambda n/(\lambda+1)$. For any $\epsilon>0$, the contact process on star graph has 
$$
E_{K,1} T_{0,0} \le (\log n) e^{(1+\epsilon)\lambda^2 n}
$$
when $n$ is sufficiently large.
\end{lemma}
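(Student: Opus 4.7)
The plan is to decompose the dynamics of $(j_t,k_t)$ starting from $(K,1)$ into alternating ``up-phases'' (center occupied) and ``down-phases'' (center vacant), and to view $T_{0,0}$ as a sum over cycles, each cycle being a single up-phase followed by a single down-phase that ends either in total extinction or in the center being re-infected (starting a new cycle). Letting $N$ be the number of cycles until extinction, I want to show $E[N] \le e^{(1+\epsilon)\lambda^2 n}$, that a non-extinction cycle has expected length $O(1)$, and that the terminal extinction cycle has expected length $O(\log n)$.

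The per-cycle extinction probability is computed directly: a down-phase starting at $(J,0)$ is a sequence of events, each independently a leaf recovery (with probability $1/(1+\lambda)$) or a center re-infection (with probability $\lambda/(1+\lambda)$), so the extinction probability in that down-phase is exactly $(1+\lambda)^{-J}$. To control $J_i$, the leaf count at the start of the $i$-th down-phase, I would use the birth-death drift $\lambda(n-j)-j = (\lambda+1)(K-j)$ during an up-phase: solving the mean ODE and averaging against the independent $\mathrm{Exp}(1)$ up-phase length yields $E[J_i\mid j_{\mathrm{start},i}=j] = K + (j-K)/(\lambda+2)$. Since leaves only decrease during down-phases, $j_{\mathrm{start},i}\le J_{i-1}$, so starting from $j_0=K$ induction gives $E[J_i]\le K$. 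Jensen's inequality applied to the convex function $x\mapsto(1+\lambda)^{-x}$ then yields
\[
E\bigl[(1+\lambda)^{-J_i}\bigr] \ge (1+\lambda)^{-K} \ge e^{-K\log(1+\lambda)} \ge e^{-\lambda^2 n},
\]
using $\log(1+\lambda)\le\lambda$ and $K\lambda=\lambda^2 n/(1+\lambda)\le\lambda^2 n$; hence the per-cycle extinction probability exceeds $e^{-(1+\epsilon)\lambda^2 n}$ for $n$ large, giving $E[N]\le e^{(1+\epsilon)\lambda^2 n}$.

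For cycle length, a non-extinction cycle consists of an $\mathrm{Exp}(1)$ up-phase plus a down-phase of mean length $O(1/(\lambda^2 n))=o(1)$ (a geometric number $\sim 1/\lambda$ of events, each of mean $1/((1+\lambda)j) = O(1/(\lambda n))$); the terminal extinction down-phase descends from $J_N$ to $0$, contributing mean time $\sum_{j=1}^{J_N} 1/((1+\lambda)j) = O(\log n)$. Combining,
\[
E_{K,1}[T_{0,0}] \le E[N]\cdot O(1) + O(\log n) \le (\log n)\, e^{(1+\epsilon)\lambda^2 n}.
\]

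The main obstacle is making the Jensen step rigorous: the relevant extinction probability per cycle is actually $E[(1+\lambda)^{-J_i}\mid \text{reached cycle }i]$, and the conditioning on non-extinction in earlier cycles can bias $J_i$ upward, since trajectories with large $J_j$'s for $j<i$ are less likely to have gone extinct. Because per-cycle extinction probabilities are exponentially small in $n$ while the time horizon is only of order $e^{(1+\epsilon)\lambda^2 n}$, this bias is a lower-order correction, but verifying it carefully probably requires a concentration estimate (Gaussian fluctuations of size $\sqrt{K}$ for $j_t$ during each $\mathrm{Exp}(1)$ up-phase, so that $J_i\le K(1+\delta)$ with overwhelming probability) combined with a union bound ruling out ``bad'' cycles within the relevant time window.
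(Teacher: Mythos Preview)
Your cycle decomposition and the use of the exact down-phase extinction probability $(1+\lambda)^{-J}$ match the paper's framework, and your cycle-length accounting is morally fine (indeed a crude uniform $O(\log n)$ bound per cycle, as the paper uses, already suffices and avoids having to justify the finer $O(1/(\lambda^2 n))$ claim when $J_i$ happens to be small). The genuine gap is exactly the one you flag: the Jensen step. Your induction ``$E[J_i]\le K$'' is not well-posed as written, because $J_i$ is only defined on the survival event $A_{i-1}=\{N\ge i\}$, and what you actually need for $E[N]\le 1/p$ is the \emph{conditional} bound $E\bigl[(1+\lambda)^{-J_i}\mid A_{i-1}\bigr]\ge p$. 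The recursion you derive gives
\[
E[J_i\mid A_{i-1}]\ \le\ K+\frac{E[J_{i-1}\mid A_{i-1}]-K}{\lambda+2},
\]
but the inductive hypothesis only controls $E[J_{i-1}\mid A_{i-2}]$; the extra conditioning on surviving the $(i-1)$-th down-phase biases $J_{i-1}$ upward, and the induction does not close. So the bound $E[N]\le e^{(1+\epsilon)\lambda^2 n}$ is not established.

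The paper sidesteps this entirely by replacing your mean bound with a deterministic spatial cutoff at $(1+\epsilon)K$. While the leaf count lies in $(0,(1+\epsilon)K]$, every center recovery yields extinction with probability $\ge e^{-(1+\epsilon)\lambda K}$ \emph{uniformly}, so no conditioning issue arises; combined with an $O(\log K)$ bound on the expected time between successive center recoveries, this gives the main term $(\log n)\,e^{(1+\epsilon)\lambda^2 n}$. Time spent above $(1+\epsilon)K$ is handled separately: the harmonic function of the birth--death chain on leaves (with the center held occupied) shows that from $K+1$ the chance of reaching $(1+\epsilon)K$ before returning to $K$ is $\le\exp(-\epsilon^2 K/2)$, and a drift/optional-stopping argument bounds each such excursion's expected length by $1/(\epsilon\lambda n)$, so the total time above the cutoff is $o(1)$. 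Your proposed concentration fix---showing $J_i\le(1+\delta)K$ with overwhelming probability and union-bounding over cycles---amounts to the same truncation carried out in a different order; it can be made rigorous, but it is not more elementary than the paper's direct excursion analysis.
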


\begin{proof}
The central vertex becomes healthy at rate 1. If this occurs when there are $k$ infected leaves then
the probability the process reaches $0,0$ before the center is infected is
$$
\left(\frac{1}{1+\lambda}\right)^k=e^{-k\log(1+\lambda)}\geq e^{-\lambda k}.
$$
When in $(0,(1+\epsilon)K]$, the rate at which a disaster occurs that takes the process to (0,0) is
$$
\geq e^{-(1+\epsilon)\lambda K}.
$$

We need to estimate the time between the end of one disaster and the start of another. Let $T$ denote the time spent when the center is healthy, i.e. the time a disaster takes.  The number of leaf infections $N$ that  will recover  while the center is healthy has a shifted geometric distribution with success probability $\lambda/(\lambda+1)$, i.e.,
$$
 P(N=j) = \left( \frac{1}{\lambda+1}\right)^{j} \cdot \frac{\lambda}{\lambda+1}
 \quad\hbox{for $j\ge 0$}.
$$

Then
$$E T\leq E (\sum_{i=0}^{N-1} \frac{1}{k-i})\leq \ln((1+\ep)K)+C \leq \ln K+C_1.$$

When the center is infected, each move takes time less than $ 1/\lambda n$ in expectation. Putting the two parts together, the expected time between two disasters is 
$$\leq \frac{1}{\lambda n}(\lambda(n-k)+k+1)+\ln K+C_1 \leq \ln K+ C_2.$$
Hence, the expected time spent in $[0,(1+\ep)K)$ before the process hits (0,0) is 
$$\leq e^{(1+\ep)\lambda K}\cdot ( \ln K+ C_2)=(\ln \lambda n +C_2)e^{(1+\ep)\lambda^2 n}.$$

To bound the excursions above $(1+\epsilon)K$,
we use the harmonic function for the birth and death chain in which the center of the star is always infected. It satisfies
$$
h_0(k) = \frac{\lambda(n-k)}{k + \lambda(n-k)} h_0(k+1) + \frac{k}{k + \lambda(n-k)} h_0(k-1),
$$
or rearranging 
$$
h_0(k+1)-h_0(k) = \frac{k}{\lambda(n-k)} [h_0(k)-h_0(k-1)].
$$ 
Since we only need this function for $k > K$ we can set $h_0(K)=0$, $h_0(K+1)=1$. In this case we will have
$$
h_0(K+j+1) - h_0(K+j) = \prod_{i=1}^j  \frac{K+i}{\lambda(n-K-i)}. 
$$
Since $K/\lambda (n-K)=1$
$$
\frac{K+i}{\lambda(n-K-i)} = \frac{K+i}{K} \frac{\lambda(n-K)}{\lambda(n-K-i)}
= \left( 1 + \frac{i}{K}\right) \left( 1 - \frac{i}{n-K} \right)^{-1}.
$$  
We will be interested in $j \ge K$ so we have
$$
h_0(K+j) \ge  h_0(K+j) - h_0(K+j-1)  \approx \exp( j(j-1)/2K ).
$$ 
From this it follows that
\begin{equation}\label{goup}
P_{K+1,1}(T_{(1+\epsilon)K}<T_K)\leq \exp( -\epsilon^2 K/2 ).
\end{equation}
At $K+1$ there is a probability $\geq (1/2\lambda n) e^{-(1+\epsilon)\lambda(K+1)}$ to hit 0,0 before the center becomes infected again, 
so the expected number of visits to $K+1$ before dying out is 
\begin{equation}\label{key}
\leq 2\lambda n e^{(1+\epsilon)\lambda(K+1)}.
\end{equation}
By \eqref{goup}, the expected number of visits to $(1+\epsilon)K$ is 
$$
\leq e^{-\epsilon^2K/2} \cdot 2\lambda n e^{(1+\epsilon)\lambda(K+1)}.
$$

Again we assume the center is always occupied and construct the process $Y_t$ which dominates the number of infected leaves.
\begin{align*}
Y_t \to Y_t +1 & \quad\text{ at rate $ \lambda(n-(1+\epsilon)K)$}\\
Y_t \to Y_t-1 & \quad\text{ at rate $(1+\epsilon)K$}
\end{align*} 
Let $T^{-}_m=\inf\{t: Y_t\leq m\}$. We want to estimate $E_{(1+\epsilon)K+1} T^-_{(1+\epsilon)K}$.

The drift of $Y_t$ is 
$$
\mu= \lambda(n - (1+\ep) K)) -(1+\epsilon)K = \lambda n - (1+\ep)\lambda n = -\ep \lambda n.
$$ 
Since $Y_t-\mu t$ is a martingale, the optional stopping theorem gives
$$
E_{(1+\epsilon)K+1,1} [Y(t\wedge T^-_{(1+\epsilon)K})-\mu(t\wedge T^-_{(1+\epsilon)K})] = (1+\epsilon)K+1,
$$
that is,
$$
E_{(1+\epsilon)K+1,1} [T^-_{(1+\epsilon)K}]= \frac{1}{-\mu}=\frac{1}{\epsilon\lambda n}.
$$
Therefore each excursion above $(1+\epsilon)K$ takes time $\leq 1/\epsilon\lambda n$ to get back to $(1+\epsilon)K$.
That is, the expected amount of time spent above $(1+\epsilon)K$ before dying out is 
$$
\leq e^{-\epsilon^2K/2} \cdot 2\lambda n e^{(1+\epsilon)\lambda(K+1)} \cdot \frac{1}{\epsilon\lambda n}\to 0 \quad\text{ as $n\to\infty$ }.
$$

Putting the two terms together, the expected survival time is 
$$\leq (\ln \lambda n +C_2)e^{(1+\ep)\lambda^2 n}+\frac{2}{\ep}e^{-\frac{\ep^2}{2}K+(1+\ep)\lambda (K+1)}\leq (\log n)e^{(1+\ep')\lambda^2 n}.$$ 
 
\end{proof}

\begin{lemma} \label{lbcr}
Given any $\epsilon>0$, the critical value $\lambda_2$ of the contact process on the $(1,n)$ tree satisfies
$$
\lambda_2\geq \sqrt{\frac{\frac{1}{2}(1-\epsilon)\log n}{n}}
$$
when $n$ is sufficiently large.
\end{lemma}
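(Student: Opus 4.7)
The plan is to show that for $\lambda^2 \le (1-\epsilon)(\log n)/(2n)$ the contact process on the $(1,n)$ tree dies out locally at the root, giving $\lambda\le\lambda_2$. Concretely, I will prove that $L_y$, the expected total time the center of star $y$ is infected, is finite at the root by setting up a linear recursion on stars and solving it. I would view the $(1,n)$ tree through its natural super-tree structure, where the super-vertices are the degree-$(n+1)$ ``stars'' and two stars are super-adjacent iff they share a degree-$2$ intermediate; the super-graph is then $\TT_n$, each super-vertex having $n+1$ super-neighbors.

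To use the star survival bound, first decompose the history of $y$-center into episodes, with a new one beginning at each ``external'' re-infection of $y$-center (a healthy-to-infected transition via an arrow $w\to y$ where intermediate $w$'s current infection was initiated by an arrow from a neighboring star, not by $y$). Between successive external re-infections the star at $y$ evolves like the isolated star contact process, so by Lemma \ref{ubsurv}---extended from start $(K,1)$ to arbitrary starts via monotonicity---each episode contributes at most $T_{\text{surv}} := (\log n) e^{(1+\epsilon')\lambda^2 n}$ to $L_y$ in expectation. Letting $N_y^{\text{ext}}$ denote the number of external re-infections of $y$, this yields
$$L_y \le T_{\text{surv}}\bigl(\mathbf{1}_{y=o} + E[N_y^{\text{ext}}]\bigr),$$
where $o$ is the initially infected root.

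Next, bound $E[N_y^{\text{ext}}]$ by a BRW-style argument: for each super-neighbor $z$, intermediate $w_{yz}$ is externally initiated from $z$ at rate $\lambda$ per unit of $z$-center active time (so at most $\lambda L_z$ such initiations in expectation), and each lasts $\mathrm{Exp}(1)$ time, during which it transmits to $y$-center with probability at most $\lambda/(1+\lambda)\le\lambda$. Summing over super-neighbors, $E[N_y^{\text{ext}}] \le \lambda^2\sum_{z\sim y}L_z$, giving the recursion $L_y \le T_{\text{surv}}\mathbf{1}_{y=o} + \alpha\sum_{z\sim y}L_z$ with $\alpha:=T_{\text{surv}}\lambda^2$. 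On $\TT_n$ the ansatz $L_y = L_0\rho^{d(y)}$ (with $d$ super-distance from $o$) reduces the non-root inequalities to $\rho=\alpha(1+n\rho^2)$, which has a root $\rho\in(0,1)$ iff $\alpha\le 1/(2\sqrt n)$; plugging back into the root equation yields $L_o<\infty$. For the parameter check, the hypothesis gives $T_{\text{surv}}\le(\log n)\,n^{(1+\epsilon')(1-\epsilon)/2}$, so $2\alpha\sqrt n \le (1-\epsilon)(\log n)^2\,n^{(1+\epsilon')(1-\epsilon)/2 - 1/2}$; choosing $\epsilon'$ small enough that $(1+\epsilon')(1-\epsilon)<1$ makes the right-hand side tend to $0$, so $\alpha\le 1/(2\sqrt n)$ for $n$ large, finishing the proof.

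The hard part will be making the segment bound truly rigorous: between two external re-infections of $y$-center, ``wasted'' external arrivals at intermediates (those that do not themselves re-infect $y$-center) can still perturb the star dynamics away from the isolated evolution that Lemma \ref{ubsurv} governs. I would handle this by dominating the true process by a ``reset'' process in which each external re-infection reinitializes the star to the maximal state $(n,1)$; in the reset process, the dynamics between resets is exactly that of an isolated star started at $(n,1)$, so Lemma \ref{ubsurv} applies directly and monotonicity gives $L_y \le L_y^{\text{reset}}$, which is then controlled by the recursion above.
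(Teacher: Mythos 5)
Your overall strategy is sound and rests on the same two pillars as the paper's proof (the star survival bound of Lemma \ref{ubsurv}, a factor $\lambda^2$ per star-to-star transmission, and the $2\sqrt n$ geometry of the star-tree: your condition $\alpha\le 1/(2\sqrt n)$ is exactly the paper's summability condition $4n(EB_{x,y})^2<1$), with the bookkeeping done by an occupation-time recursion instead of the paper's frozen-particle branching random walk and path count $\binom{2m}{m}n^m(EB_{x,y})^{2m}$. However, the step you yourself flag as the hard part is not handled correctly. The reset process does \emph{not} dominate the true star process ``by monotonicity'': attractiveness says that \emph{adding} infection sources increases the process, and it is the true process that has the extra external leaf infections between your reset times, so the inequality runs the wrong way --- once the reset state $(n,1)$ has decayed toward the quasi-stationary level, an external arrow can infect a leaf that is healthy in the reset process, and no pathwise domination holds. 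If you instead reset at every external arrival at a leaf (which \emph{does} give a genuine domination), the episode count is controlled by $\lambda\sum_z L_z$ rather than $\lambda^2\sum_z L_z$, and then $\alpha=T_{\mathrm{surv}}\lambda$ is never $\le 1/(2\sqrt n)$ for any $c>0$, so the argument collapses. What actually rescues your decomposition is a finer causal argument, not monotonicity: within an episode, an arrow emitted during an externally-initiated leaf infection either lands on an already infected center (and changes nothing) or, by your own definition, starts a new episode; a first-disagreement argument along these lines shows the center's occupation during an episode is bounded by that of the isolated star run from the episode-start configuration. The paper's construction sidesteps this entirely: every infection delivered to a neighboring center spawns its own independent star process, so only $EB_{x,y}\le \lambda^2\,E T_{0,0}$ ever enters.

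Two further repairs are needed. First, Lemma \ref{ubsurv} is stated for the initial state $(K,1)$ with $K=\lambda n/(\lambda+1)$; monotonicity extends it to \emph{smaller} initial states (which is all the paper needs, since its star processes start from a single infected center), not to $(n,1)$, which your reset/episode bound requires; you must add the drift estimate, implicit in the proof of Lemma \ref{ubsurv}, that the descent from $n$ to $(1+\epsilon)K$ infected leaves costs only $O(1/(\epsilon\lambda))$ expected time. Second, ``plugging the ansatz back in yields $L_o<\infty$'' is not yet a proof: $L_y\equiv\infty$ satisfies your inequality, so exhibiting a finite supersolution does not by itself bound $L$. You need an a priori finiteness or truncation step, e.g.\ kill the infection outside a ball of super-radius $m$ (a finite graph, where all expected occupation times are finite), compare with your supersolution there (a finite subtree of $\TT_n$ has adjacency spectral radius below $2\sqrt n$, so the iterated error $(\alpha A)^k u\to 0$), and then let $m\to\infty$ by monotone convergence; finally add a sentence converting $L_o<\infty$ into failure of local survival (each re-infection of the root keeps it occupied for a fresh mean-one exponential time, so infinitely many returns would force $L_o=\infty$). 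With these repairs your route works and is a legitimately different organization of the same estimates.
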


\begin{proof}
Let $x$ be a vertex with degree $n$ and $S_x$ be the star graph with center $x$.
Starting from the center $x$ infected, we run the contact process on $S_x$ until it does out. 
When a particle in $S_x$ gives birth onto a neighboring star $y$ we freeze the particle. These particles will be the descendants of $x$ in a branching random walk that we use to dominate the contact process on the $(1,n)$ tree. When the contact process on $S_x$ dies out, each frozen particle starts a new contact process on its star graph. If there are several frozen particles at the same site they start independent contact processes. Again we freeze every particle that escapes from $S_y$ , and so on. 

Let $B_{x,y}$ be the total number of infections reaching site $y$ for the contact process on the star $S_x$.
To upper bound $B_{x,y}$ we assume that the central vertex $x$ is always infected until the star dies out. Under this assumption 
the $B_{x,y}$ are independent. To bound the number of $B_{x,y}$ we replace the contact process by a branching random walk. 
If $\lambda = \sqrt{c (\log n)/n}$ then by Lemma \ref{ubsurv}, 
$$
EB_{x,y} \leq \lambda^2 e^{(1+\epsilon)\lambda^2 n} =\lambda^2 n^{(1+\epsilon)c}.
$$
Starting from the origin, there are 
$$
\le {2m \choose m}  {n^m} \cdot 1^m \le 2^{2m} n^m
$$ 
paths of length $2m$ that returns to it. So in expectation we have 
$$
\le 2^{2m} n^m (\lambda^2 n^{(1+\epsilon)c})^{2m}
\leq (4\lambda^4 n^{2(1+\epsilon)c+1})^m
$$
particles returning to the origin. When $c<\frac{1}{2}(1-\epsilon)$,
$$
4\lambda^4 n^{2(1+\epsilon)c+1}\leq \frac{\log^2 n}{n^2} \cdot n^{2-\epsilon^2}=\frac{\log^2n}{n^{\epsilon^2}}
$$
From this it follows that 
$$
\sum_{m=1}^{\infty} {2m \choose m} n^m (\lambda^2 n^{(1+\epsilon)c})^{2m}
\leq \sum_{m=1}^\infty \left( \frac{4\log^2n}{n^{\epsilon^2}} \right)^m <\infty.
$$
Therefore the expected number of infections that return to the origin is finite, which means the process does not survive locally.
\end{proof}

\clearp

\subsection{First upper bound}

In this section we will show that if $\ep>0$ and $n$ is large then 
$$
\lambda_2 \le \sqrt{(1+\ep)(\log n)/n}.
$$
To do this we will (i) get a lower bound on the time that a contact process on a star with $n$ leaves survives,
(ii) show that this is enough time so that with high probability the infection will be passed to an adjacent
vertex of degree $n$, and (iii) show that with high probability the new vertex experiences a long lasting infection. 

\begin{lemma} \label{super}
Let $\delta>0$. Suppose $\lambda = \sqrt{c(\log n)/n}$ and let 
$$
\theta=\frac{1}{\lambda+1}\left(  \lambda - \frac{1}{\delta\lambda n } \right)
$$ 
If $n$ is large then $h(X_t) \equiv (1-\theta)^{X_t}$ is a supermartingale when $X_t \le (1-4\delta)\lambda n$.
\end{lemma}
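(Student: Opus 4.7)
The plan is to verify the supermartingale condition by a direct computation of the infinitesimal generator. I would interpret $X_t$ as the number of infected leaves on the star, modelled by the birth-death chain obtained by treating the center as always infected:
\begin{align*}
X_t\to X_t+1 &\quad\text{at rate } \lambda(n-X_t),\\
X_t\to X_t-1 &\quad\text{at rate } X_t.
\end{align*}
On this chain a short calculation collapses the generator of $h(k)=(1-\theta)^k$ to
$$
\mathcal{L} h(k)\;=\;\lambda(n-k)\bigl[(1-\theta)^{k+1}-(1-\theta)^k\bigr]+k\bigl[(1-\theta)^{k-1}-(1-\theta)^k\bigr]\;=\;\theta(1-\theta)^{k-1}\bigl[\,k-\lambda(n-k)(1-\theta)\,\bigr].
$$
The definition of $\theta$ gives $1-\theta=(1+1/(\delta\lambda n))/(\lambda+1)>0$, and $\theta>0$ once $\lambda^2 n>1/\delta$, which holds for large $n$ since $\lambda^2 n=c\log n\to\infty$. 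So $\mathcal{L}h(k)\le 0$ reduces to nonpositivity of the bracket, i.e.\
$$
k\bigl(1+\lambda(1-\theta)\bigr)\;\le\;\lambda n(1-\theta).
$$

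After substituting the explicit form of $1-\theta$ and clearing a factor of $(\lambda+1)^{-1}$ from both sides, this becomes $k(2\lambda+1+1/(\delta n))\le \lambda n+1/\delta$, and the choice $k=(1-4\delta)\lambda n$ reduces it to
$$
4\delta\,\lambda n+\tfrac{1}{\delta}\;\ge\;(1-4\delta)\bigl(2\lambda^2 n+\tfrac{\lambda}{\delta}\bigr).
$$
With $\lambda=\sqrt{c(\log n)/n}$ the left-hand side is of order $\sqrt{n\log n}$ while the right-hand side is only $O(\log n)$, so the inequality holds for all sufficiently large $n$ and the lemma follows.

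The main (and essentially only) obstacle is the asymptotic bookkeeping in the last step, i.e.\ checking that the $4\delta$ slack in the range $X_t\le(1-4\delta)\lambda n$ comfortably absorbs both the $O(\lambda)$ correction coming from the denominator $1+\lambda(1-\theta)$ and the $O(1/(\delta\lambda n))$ perturbation that distinguishes $\theta$ from the nominal value $\lambda/(\lambda+1)$. Since $\lambda\to 0$ while $\delta$ is held fixed, neither perturbation can compete with $4\delta$, and the supermartingale property holds with room to spare, which is what makes this version of $h$ useful when it is later plugged into an optional-stopping argument to lower bound the survival time of the star.
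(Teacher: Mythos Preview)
There is a genuine gap: you have misidentified the process $X_t$. It is not the birth--death chain with the center held permanently infected; it is the number of infected leaves in the actual contact process on the star, where the center also recovers at rate $1$. In the paper's model this produces a third transition: when the center heals, a geometric number $Z$ of leaves (with $P(Z=j)=(1+\lambda)^{-j}\,\lambda/(1+\lambda)$ for $j\ge 0$) recover before the center is reinfected, giving
$$
V\;\longrightarrow\;V(1-\theta)^{-Z}\qquad\text{at rate }1.
$$
A short computation yields $E(1-\theta)^{-Z}-1=\theta/\bigl(\lambda-\theta(1+\lambda)\bigr)$, and the definition of $\theta$ is rigged precisely so that $\lambda-\theta(1+\lambda)=1/(\delta\lambda n)$; hence this third transition contributes $+V\theta\cdot\delta\lambda n$ to the generator. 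That is the \emph{same order} as the net drift $-2\delta\lambda n\cdot V\theta$ coming from the two transitions you kept, and the supermartingale inequality survives only because $-2\delta+\delta<0$.

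Your computation is correct for the simplified chain, but that chain never reaches the state $(0,0)$, so the resulting supermartingale is useless for the downstream applications --- for instance, the bound on $P_{K,1}(T_{0,0}<T_L)$ in Lemma~\ref{ignite}(ii), or the survival-time estimate in Lemma~\ref{survive}. The $-1/(\delta\lambda n)$ correction in $\theta$ that you dismiss as a harmless perturbation is in fact the whole point: it is exactly what is needed to control the center-recovery term, and in your model that correction plays no role at all.
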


\begin{proof}
Suppose the current value is $V = (1-\theta)^{X_t}$ where $X_t \le (1-4\delta)\lambda n$.
New leaves become infected at rate $\lambda(n-k) \ge \lambda(n -\lambda n) \ge (1-\delta)n$ if $n$ is large, so
\begin{align*}
V \to V/(1-\theta) &\quad\hbox{at rate $k$} \\
V \to V(1-\theta) &\quad\hbox{at rate $\ge (1-\delta) \lambda n$} \\
V \to V(1-\theta)^{-Z} & \quad\hbox{at rate 1}
\end{align*}
The change in value due to the first two transitions is, if $\theta$ is small,
\begin{align*}
V \left(\frac{1}{1-\theta} - 1\right) \le (1-\delta)^{-1}\theta V &\quad\hbox{at rate $k$} \\
V[(1-\theta) -1] = -\theta V &\quad\hbox{at rate $\ge (1-\delta)\lambda n$} 
\end{align*}
When $k \le (1-4\delta)\lambda n <(1-\delta)(1-3\delta)\lambda n$ this is a net difference of $V\theta$ times
\beq
 [(1-\delta)^{-1}k - (1-\delta)\lambda n]  \le  -(2\delta \lambda n ) 
\label{part1}
\eeq
In the third case
\begin{align*}E(1-\theta)^{-Z} 
& = \sum_{k=0}^\infty \left(\frac{1}{1+\lambda}\right)^{k} \frac{\lambda}{1+\lambda} \cdot (1-\theta)^{-k} \\
& = \frac{\lambda}{1+\lambda} \sum_{k=0}^\infty  \left(\frac{1}{(1+\lambda)(1-\theta)}\right)^{k} \\
& = \frac{\lambda}{1+\lambda} \cdot \frac{1} { 1 - \frac{1}{(1+\lambda)(1-\theta)}}  \\
& = \frac{\lambda(1-\theta)}{\lambda - \theta - \theta\lambda}
\end{align*}
so we have 
$$
V(E(1-\theta)^{-Z} - 1) = V\theta \frac{1}{\lambda - \theta(1+\lambda)}
$$
For the chosen value of $\theta$ this is  $=\delta\lambda n$. Combining this with \eqref{part1}
so for any $\delta>0$ this is a supermartingale for large $n$.
\end{proof} 

Let $T_a^- = \min\{ t : X_t < a\}$ and let $T_b = \min\{ t : X_t=b\}$ and suppose $a < x < b$. Since $h(X_t)$ is a supermartingale
and $h$ is decreasing
$$
h(x) \ge h(a-1) P_x(T^-_a < T_b ) h(a-1) + h(b) [1 -  P_x(T^-_a < T_b ) ]
$$
Rearranging we have
$$
P_x(T^-_a < T_b ) \le \frac{h(x) -h(b)}{h(a-1)-h(b)}
$$
when $x=b-1$ this implies
\begin{align*}
P_x(T^-_a < T_b ) & \le \frac{h(b-1) -(1-\theta) h(b-1)}{h(a-1)-h(b-1)}\\
& = \frac{\theta}{h(a-1)/h(b-1) - 1} = \frac{\theta}{(1-\theta)^{a-b} - 1}
\end{align*} 

We will apply this result with $b = (1-4\delta)\lambda n$. Let $\eta>0$.
If $\delta$ is small $b \ge (1-\eta)\lambda n$. If $\lambda$ is small then
$1-\theta < 1-(1-\eta)\lambda$. Take $a=\eta L$. With these choices if $n$ is large.
\beq
P( T^-_a < T_b ) \le (1+\eta)\lambda \exp(-(1-\eta)^2\lambda^2 n)
\label{abub}
\eeq
If we replace $(1-\eta)^2$ by $1-2\eta$ the bound still holds. Let 
$$
G_L = \{ X_t\hbox{ returns }(1/2\lambda)e^{(1-3\eta)\lambda^2 n}\hbox{ times to $L$ before going }< \eta L \}.
$$
It follows from \eqref{abub} that
$$
P(G_L) \ge 1 - e^{-\eta\lambda^2 n}
$$
In order to return to $L$ we have to jump from $L-1$ to $L$, a time that dominates an exponential random variable with parameter $\lambda n/2$ so the law of large numbers tells us that the total amount of time before $X_t < \eta L$ is
\beq
\ge \frac{1}{\lambda^2 n} e^{(1-\eta)\lambda^2 n}
\label{timebd}
\eeq
with high probability (i.e., with a probability that tends to 1 as $n\to\infty$).

To prepare for the proof in the next section we will do the next few computations in general.
If $\lambda = \sqrt{(c+\ep) (\log n) /n }$ then this is
$$
\ge \frac{1}{2((c+\ep) \log n } n^{(c+\ep)(1-\eta_2)} \ge n^c
$$
for large $n$ if $\eta$ is small. Putting it all together, if we denote the state of the star by $(\ell,m)$ where $\ell$ is the number of infected leaves 

\begin{lemma} \label{survive}
Let $L = (1-4\delta)\lambda n$. and let $N_t$ be the number of infected leaves at time $t$. 
If $\delta$ and $\eta$ are chosen small enough then as $n \to \infty$
$$
P_{L,0}\left( \min_{t \le n^c}  N_t \ge \eta L \right) \to 1
$$
\end{lemma}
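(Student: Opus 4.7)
The plan is to use the supermartingale $h(X_t) = (1-\theta)^{X_t}$ from Lemma \ref{super} to show that the number of infected leaves makes exponentially many (in $\lambda^2 n$) returns to $L$ before dropping below $\eta L$, and that the total time elapsed during these returns exceeds $n^c$ under the choice $\lambda^2 n \ge (c+\ep)\log n$ implicit in the surrounding setup.

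The first step is a single-excursion estimate. Each time $N_t$ dips from $L$ to $L-1$, optional stopping applied to $h$ on $[\eta L -1, L]$ gives
$$
P_{L-1}(T^-_{\eta L} < T_L) \le (1+\eta)\lambda\exp(-(1-\eta)^2\lambda^2 n),
$$
exactly the bound derived in the paragraph preceding the lemma statement. The supermartingale property is only guaranteed when $X_t \le L$, but along a downward excursion the process automatically stays in $[\eta L-1, L]$ until the excursion ends, so the stopping-time inequality applies.

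I then define $G_L$ to be the event that the process completes at least $M = (2\lambda)^{-1} e^{(1-3\eta)\lambda^2 n}$ returns to $L$ before $T^-_{\eta L}$. By the strong Markov property and a union bound over the $M$ excursions,
$$
P(G_L^c) \le M(1+\eta)\lambda e^{-(1-\eta)^2\lambda^2 n} \le e^{-\eta\lambda^2 n} \to 0.
$$
Intermediate upward excursions above $L$ do not interfere: after any such excursion the process returns to $L$ and the strong Markov property allows us to restart the excursion count.

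For the lower bound on time, each return to $L$ begins with an upward jump from $L-1$ whose waiting time stochastically dominates an $\mathrm{Exp}(\lambda n/2)$ variable (the total upward rate at $L-1$ is at most $\lambda n$, and the factor of two absorbs the time spent with the center healthy). A standard concentration bound for the sum of $M$ i.i.d.\ exponentials shows that the total elapsed time before $T^-_{\eta L}$ exceeds $(\lambda^2 n)^{-1} e^{(1-3\eta)\lambda^2 n}$ with probability tending to one. Substituting $\lambda^2 n = (c+\ep)\log n$ and choosing $\eta$ small enough that $(c+\ep)(1-3\eta) > c$, this lower bound exceeds $n^c$. To handle the initial state $(L,0)$, observe that the center becomes infected at rate $\lambda L \to \infty$, so during the opening disaster only $O(1/\lambda) = o(L)$ leaves heal in expectation and I may restart the argument from a state $(L',1)$ with $L' = L(1-o(1))$, adjusting $\delta$ and $\eta$ inconsequentially. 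The main obstacle I anticipate is clean bookkeeping at the upper boundary $L$: the supermartingale only controls $X_t$ in the range $X_t \le L$, so the intermediate upward excursions must be carefully separated from the downward ones in both the excursion count and the time lower bound.
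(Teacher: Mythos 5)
Your proposal follows essentially the same route as the paper's argument, which appears in the paragraph immediately preceding the statement of the lemma (the lemma itself has no separate proof environment). The paper also derives the single-excursion bound \eqref{abub} from the supermartingale of Lemma \ref{super}, defines the same event $G_L$ counting $M=(2\lambda)^{-1}e^{(1-3\eta)\lambda^2 n}$ returns to $L$ before $T^-_{\eta L}$, bounds $P(G_L^c)\le e^{-\eta\lambda^2 n}$, and then lower-bounds the elapsed time by stochastically dominating each return by an exponential waiting time and invoking the law of large numbers, finally plugging in $\lambda=\sqrt{(c+\ep)(\log n)/n}$ to get $\ge n^c$. Your union bound in place of the implicit geometric comparison, your concentration bound for the sum of exponentials in place of the paper's appeal to the LLN, and your explicit treatment of the initial state $(L,0)$ with the center healthy are all minor presentational variations of the same argument, not a different method; the paper leaves the last of these points implicit, so your remark there is a small but legitimate tightening.
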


\mn
{\bf Push.} Suppose that the good event in Lemma \ref{survive} occurs. Up to time $n$ there are always at least $\eta L$ infected leaves. At the time we start to try to push the infection the center may not be occupied but the probability it does not become occupied by time 1 is $\le e^{-\eta L}$. With the center infected, the probability of passing the infection to the next degree $n$ vertex to the right is
$$
\ge \left( e^{-1} (1-e^{-\lambda}) \right)^2 \ge (\lambda/e)^2 \ge \frac{ C_1 \log n}{n} 
\quad\hbox{where $C_1=(1+\ep)e^{-2}$}
$$
Here we assume that each infection takes time $\le 1$. The lower bound comes from the fact that the pushing vertex has to survive for time 1, $e^{-1}$ and infect its neighbor by time 1. When $c=1$ we have $n/6$ chances to do this before time $n/2$ so the probability all fail is
\beq
\le \left( 1- \frac{C_1 \log n}{n} \right)^{n/6} \le \exp(-(C_1/6) \log n) \to 0.
\label{push}
\eeq

\mn
{\bf Ignite.} Now that we have the central vertex of the degree $n$ star infected, we need to increase the number of infected leaves to $L$ by time $n^2/2$.

\begin{lemma} \label{ignite}
Let $T_{0,0}$ be the first time the star is healthy. If $K = \lambda n/\sqrt{\log n}$, then for large $k$
\begin{align*}
& (i) P_{0,1}( T_K > T_{0,0} )  \le  3/\sqrt{\log n},  \\
& (ii) P_{K,1} (T_{0,0} < T_L)  \le  2\exp(- C \sqrt{\log n}) \\
& (iii) E_{0,1} \min\{ T_{0,0},  T_L \} \le 1
\end{align*}
\end{lemma}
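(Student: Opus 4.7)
Part (ii) is the most direct: apply optional stopping to the supermartingale $h(X_t)=(1-\theta)^{X_t}$ of Lemma~\ref{super} at $\tau=T_{0,0}\wedge T_L$, starting from $(K,1)$. Since $h\ge 0$ and $h(0)=1$, we get
\[
P_{K,1}(T_{0,0}<T_L)\le E[h(X_\tau)]\le h(K)=(1-\theta)^K\le\exp(-\theta K).
\]
Plugging in $\theta\approx\lambda$ (the correction $1/(\delta\lambda n)$ is lower order) and $K=\lambda n/\sqrt{\log n}$ with $\lambda^2 n=(c+\varepsilon)\log n$ makes $\theta K$ of order $\sqrt{\log n}$, yielding the stated bound.

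For (i), my plan is a short-time concentration argument on $[0,t_0]$ with $t_0=2/\sqrt{\log n}$. Let $A$ be the event that the central vertex does not recover on this interval; then $P(A)=e^{-t_0}\ge 1-2/\sqrt{\log n}$, and because $(0,0)$ is unreachable while the center is infected, $T_{0,0}>t_0$ on $A$, so it suffices to bound $P(X_{t_0}<K\mid A)$. On $A$, $X_t$ is a birth--death chain with birth rate $\lambda(n-X_t)$ and death rate $X_t$ starting from $0$; solving the mean-field ODE $d\mu/dt=\lambda n-(1+\lambda)\mu$ gives $E[X_{t_0}\mid A]\approx\lambda n t_0=2K$. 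A Poisson-type variance bound $\var(X_{t_0})=O(K)$ (obtained by dominating births with a rate-$\lambda n$ Poisson process and noting $X_s\le K$ with high probability throughout the window) combined with Chebyshev yields $P(X_{t_0}<K\mid A)=O(1/K)$, which is $o(1/\sqrt{\log n})$. Adding the $1/(\lambda n+1)$ probability that the very first transition from $(0,1)$ is the center recovery into $(0,0)$ gives the stated $3/\sqrt{\log n}$ bound.

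For (iii), the intuition is that $X_t$ has drift of order $\lambda n$ toward $L$ while $X_t\le L$ and the center is infected, so the process should reach either $L$ or $(0,0)$ in expected time $O(L/\lambda n)=O(1)$. To make this rigorous I would use a Lyapunov / ODE estimate: either construct $f(x,k)$ with $Lf\le -1$ in the interior of $(0,L)$ and $f(0,1)\le 1$, or decompose $\min(T_{0,0},T_L)$ into alternating excursions while the center is alive and dead, bounding each. The mean-field ODE solution $\mu(t)=\frac{\lambda n}{1+\lambda}(1-e^{-(1+\lambda)t})$ reaches $L=(1-4\delta)\lambda n$ at the time $t^*=(1+\lambda)^{-1}\log(1/(4\delta-\lambda(1-4\delta)))$, which is $\le 1$ provided $\delta$ is chosen not too small (e.g., $\delta\ge 1/8$) and $\lambda$ is small. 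A natural Lyapunov candidate is $f(x,1)=\int_x^L dy/(\lambda n-(1+\lambda)y)$ with correction $f(x,0)=f(x,1)+C/\lambda$ for the center-off state, chosen so that the $\lambda x$ flipping rate dominates the downward drift of $f(\cdot,0)$.

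The hardest step is the Lyapunov construction for (iii) when one wants the literal constant $1$: the candidate above gives $Lf(x,1)\approx -1$ nicely, but balancing the correction $C$ to simultaneously control $Lf(x,0)$ uniformly up to $x$ near $L$ is delicate because $\lambda n-(1+\lambda)L$ is small. The variance estimate in (i) is also subtle because the birth rate depends on $X_t$, but a coupling with a homogeneous rate-$\lambda n$ Poisson process combined with the fact that deaths occur at rate $\le K$ throughout the window suffices to keep $\var(X_{t_0})$ of order $K$.
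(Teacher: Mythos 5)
Your parts (i) and (ii) are essentially the paper's own proof. For (ii) the paper does exactly what you propose: stop the supermartingale of Lemma~\ref{super} at $T_{0,0}\wedge T_L$, use $h(0)=1$ and $h\ge 0$ to get $P_{K,1}(T_{0,0}<T_L)\le (1-\theta)^K$, and note $\theta\sim\lambda$ so that $\theta K\sim \lambda^2 n/\sqrt{\log n}=C\sqrt{\log n}$. For (i) the paper likewise pays $\le 2/\sqrt{\log n}$ for the center to stay infected on a window of length $2/\sqrt{\log n}$ and then applies Chebyshev; the only difference is the concentration input. You can discard your Poisson-domination variance estimate: conditioned on the center being infected throughout $[0,t_0]$, the $n$ leaves evolve as independent two-state chains, so $X_{t_0}$ is exactly $\mathrm{Binomial}(n,p_0(t_0))$ with $p_0(t_0)=\frac{\lambda}{1+\lambda}\bigl(1-e^{-(1+\lambda)t_0}\bigr)\ge \lambda t_0/2$, whence the variance is at most the mean and Chebyshev gives an $O(n^{-1/2})=o(1/\sqrt{\log n})$ failure probability; your bookkeeping (mean $\approx 2K$ against threshold $K$) is the right way to run this step.

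Part (iii) is where you have a genuine gap: you sketch a Lyapunov construction that you explicitly leave unfinished (the balancing of the center-off correction near $x=L$), and your ODE route forces $\delta$ to be bounded away from $0$, which conflicts with the rest of the argument where $\delta,\eta$ must be taken small. The paper's proof is a two-line optional stopping argument that avoids both problems: up to $V_L=T_{0,0}\wedge T_L$ the process $X_t-\mu t$ is a submartingale, where $\mu$ is a linear lower bound on the drift obtained from the rates --- leaf infections at rate $\lambda(n-k)$, leaf recoveries at rate $k\le L$, and center-off excursions occurring at rate $1$, each removing on average $EZ=1/\lambda$ leaves, a lower-order correction since $\lambda^2 n\to\infty$. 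Optional stopping at the bounded time $V_L\wedge s$ gives $\mu\,E(V_L\wedge s)\le E X(V_L\wedge s)\le L$, and letting $s\to\infty$ yields $E V_L\le L/\mu$. For the literal constant $1$ the paper works at this point with $L=\lambda n/4$, so that the drift is $\ge \lambda n/2-1/\lambda$ and $L/\mu\le 1$; for $L=(1-4\delta)\lambda n$ the same argument gives drift $\ge (4\delta-\lambda)\lambda n-1/\lambda$ and hence $E V_L=O(1/\delta)$, which is all that the block construction downstream actually uses (any $O(1)$, indeed any $o(n^c)$, bound suffices). So your instinct that the constant $1$ is delicate for small $\delta$ is correct, but the remedy is the elementary linear-drift submartingale, not a Lyapunov function.
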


\begin{proof}  
Let $p_0(t)$ be the probability a leaf is infected at time $t$ when there are no infected leaves at time 0 and
the central vertex has been infected for all $s \le t$. $p_0(0)=0$ and
$$
\frac{dp_0(t)}{dt} = - p_0(t) + \lambda(1-p_0(t)) = \lambda -(\lambda+1) p_0(t) 
$$
Solving gives 
$$
p_0(t) = \frac{\lambda}{\lambda + 1} ( 1 - e^{-(\lambda +1)t} )
$$
As $t \to 0$
$$
\frac{ 1 - e^{-(\lambda +1)t} }{ (\lambda +1)t } \to 1
$$
so if $t$ is small $p_0(t) \ge \lambda t/2$

Taking $t = 2/\sqrt{\log n}$ it follows that if $B =  \hbox{Binomial}(n,\lambda/\sqrt{\log n})$
$$
P_{0,1}( T_K < T_{0,0} ) \ge P( B > K) \exp(-2/\sqrt{\log n})
$$
The second factor is the probability that the center stays infected until time $1/\sqrt{\log n}$, and 
$$
\exp(-2/\sqrt{\log n})\ge 1 - 2/\sqrt{\log n}. 
$$
$B$ has mean $\lambda n/\sqrt{\log n}$ and variance $\le \lambda n/\sqrt{\log n}$ so Chebyshev's inequality implies
$$
P( B < \lambda n/(2\sqrt{\log n}) ) \le \frac{\lambda n/\sqrt{\log n}}{(\lambda n/(2\sqrt{\log n}))^2} \le \frac{4 \sqrt{\log n}}{\lambda n} \le n^{-1/2}
$$
For (ii) we use the supermartingale from Lemma \ref{super}. If $q=P_{K,1}(T_{0,0} < T_L)$
then we have
$$
q\le (1-\lambda/3)^{\lambda n/\sqrt{\log n}} \le \exp(- \lambda^2 n/3\sqrt{\log n} ) = \exp( - C \sqrt{\log n} )
$$

To bound the time we note that $EZ = (\lambda+1)/\lambda - 1 = 1/\lambda$ so
$$
\mu = \lambda n/2 - 1/\lambda
$$
gives a lower bound on the drift $X_t - \mu t$ is a submartingale
before time $V_L = T_{0,0} \wedge T_L$. Stopping this submartingale at the bounded stopping time $V_L \wedge s$ 
$$
EX(V_L\wedge s) - \mu E(V_L\wedge s) \ge EX_0=0.
$$
Since $EX(V_L\wedge s) \le L$, it follows that
$$
E(V_L\wedge s)\le L/\mu
$$ 
Letting $s\to\infty$ we have $EV_L\le L/\mu \le 1$ since $L=\lambda n/4$ and $1/\lambda \le \lambda n/2$.
\end{proof}

\mn
{\bf Comparison with oriented percolation.} We choose a bi-infinite path through the tree and look at the vertices of degree $n$ 
(which we call stars) along the path.
Pick some star on the path and number it 0. We label the stars on the path by $\ZZ$. Let ${\cal L} = \{ (k,\ell) \in 
Z^2 : k+\ell \hbox{ is even}\}.$ We say that $(k,\ell) \in {\cal L}$ is wet if the $k$th star has $\eta L$ infected leaves at time $\ell n$.

If this holds then Lemma \ref{ignite} implies that by time $\ell n + n/4$ the number of infected leaves will have reached $L$.
Lemma \ref{survive} then implies that with high probability the number of infected leaves will remain $\ge \eta L$ until time
$(\ell+1) n$. The estimate in \eqref{push} implies that before time $\ell n + 3n/4$ the centers of the stars at $k-1$ and $k+1$ will become infected.
Using Lemma \ref{ignite} again we see that with high probability $(k-1,\ell+1)$ and $(k+1,\ell+1)$ will both be wet. 
Call this event $G_{k,\ell}$. The events $G_{k,\ell}$ and $G_{k+2,\ell}$ are dependent but $G_{k,\ell}$ is independent of $G_{K+4,\ell}$ 
and of $G_{k',\ell'}$ when $\ell'\neq \ell$. 

In the terminology of \cite{StFlour} the percolation process is 1-dependent. Using Theorem 4.1 from that source,
we conclude  that there is positive probability that the process survives. To prove that it
survives locally we note that if $\eta^0_n$ and $\eta^1_n$ are oriented percolation starting with $\{0\}$ and all sites occupied
respectively and we let $r_n = \max\{ x : x \in \eta^0_n \}$ and $\ell_n = \min\{ x : x \in \eta^0_n \}$ then on $\eta_n^0 \neq \emptyset$
$$
\eta^0_n = \eta^1_n \cap [\ell_n,r_n]
$$
Theorem 4.2 in \cite{StFlour} implies that if the block event has probability close enough to 1 then
$$
\liminf_{n \to \infty} P( 0 \in \eta^1_{2n} ) \ge 19/20.
$$

\subsection{Closing the gap}

To bring the upper bound down to 
$$
\sqrt{(1/2+\ep)(\log n)/n}
$$ 
we need to take advantage of all of the stars, not just those on the path. 
When $c >1/2$ Lemma \ref{ignite} implies that if we have $\ge \eta L$ infected leaves at time 0 then with high probability
we have at least $L$ by time $n^c/4$. Lemma \ref{survive} then implies that with high probability 
by time $n^c/4$ and hence the number of infected leaves is always $\ge \eta L$ during $[n^c/4,3n^c/4]$. 

\mn
{\bf Step 1. Pushing the infection out to distance 2m.} 

\medskip
The first step in pushing to adjacent stars is for the leaves to infect the center 
within time one. When the number of infected leaves is always $\ge \eta L$,
the probability that this will take longer than one unit of time is $\le \exp(-\eta L)$. Using large deviations results for the Poisson process,
the probability there are more than $2\eta_1 L n^c$ arrivals in a rate $\eta_1L$ Poisson process run for time $n^c/2$ is 
$\le \exp(-\gamma L n^c)$ so if we let $G_1 = \{$ there is no interval of length $\ge 1$ between reinfections of the root $\}$, then $P(G_1) \to 1$ as $n \to\infty$.

The probability all attempts in a time interval of length $n^c/2$ fail to push the infection to a neighbor is 
$$
\left( 1- \frac{C \log n}{n} \right)^{n^{c}/6} \le \exp( - C  n^{c-1} \log n )
$$
so the probability of at least one success is 
\beq
\ge 1 - \exp( - C n^{c-1} ) \sim C n^{c-1}
\label{pushc}
\eeq 
as $n \to\infty$.

Once we push the infection to a new star, it has to survive there. Lemma \ref{ignite} implies that this probability tends to 1 as $n\to\infty$. In studying
the process on the path we could compare with 1-dependent oriented percolation. We do not have that option when we are working on the full tree
so we compare with bond-site percolation. The survival of an infection at a site and the lack of gaps of size 1 in which the center is not infected
are the site events. If we condition that the event $G_1$ occurs at a vertex then the pushing events for different neighbors are independent, 
so we can compare with an independent bond-site percolation process. We select one star to call the root. In the first phase of the
construction we only allow the infection to be passed to other stars that are children of the current vertex and are at distance 2.
We say that a star at distance $2m$ that is a descendant of the root is wet if it has $\ge \eta L$ infected leaves at time $mn^c$.
If the event $G_1$ occurs at a vertex, the number of neighboring stars that become infected is $\ge \hbox{binomial}(n,n^{c-1})$, which has
mean is $n^c$ and the variance is $\le n^c$. Let $Z_m$ be the number of wet stars at distance $2m$ at time $mn^c$, then $EZ_n = n^{c}$. 

\mn
{\bf Step 2. Bringing the infection back to the root.} 

\medskip
Each star at distance $2m$ from the root has a unique path back to the root.  By \eqref{pushc}
the probability for pushing the infection at each step is $\ge n^{c-1}$ so the mean number of infection paths $N_m$ that go out a distance 
$2m$ and lead back to the origin is
$$
EN_m \ge n^{(2c-1)m}.
$$ 
The different infection paths back to the root are not independent.
The number of paths from distance $2m$ back to the root that agree in the last $k$ steps is 
$$
\sim n^k (n^{m-k})^2.
$$
The probability that all the edges in the combined path are successful pushes is 
$$
n^{2(c-1)[k + 2(m-k)]}.
$$
Here and in the next step we are assuming that the probability of a successful push is exactly $n^{c-1}$
The expected number of successful pairs of paths outward to distance $2m$ that agree in the first $k$ steps is
$$
\sim  n^{(2c-1)[k + 2(m-k)]}.
$$
Thus the second moment of the number of successful paths out and back is
\begin{align*}
EN^2_m & \le \sum_{k=0}^m n^{(2c-1)[k + 2(m-k)]} \\
& \le n^{(2c-1)2m} \left( 1 + \sum_{\ell=1}^m n^{-(2c-1) \ell} \right).
\end{align*}
This implies that $(EN_m)^2/E(N_m^2) \to 1$. The Cauchy-Schwarz inequality implies that
$$
E\left( N_m 1_{\{N_m>0\}} \right)^2 \le E(N_m^2) P(N_m>0).
$$
Rearranging we conclude that 
$$
P(N_m>0) \ge (E N_m)^2 /E(N_m^2) \to 1.
$$ 
Since $m$ is arbitrary we have 
that the infection returns to the root at arbitrarily large times so we have established out lower bound on $\lambda_2$.

\subsection{ Extension to period k+1}

In this section we prove Theorem \ref{akub2}. As for the $(1,n)$ tree there are two things that must be proved.

\mn
{\bf Upper bound on $\lambda_c$.}
The structure of the proof is almost the same as the period 2 case,  so we content ourselves
to compute the constant. Suppose that the probability of pushing the infection across an edge is $n^{-a}$. The number of infections we
will generate per step as we work our way away from the root is $n^{b+1} n^{-a}$ and as we work our way back in is $n^{-a}$.
Thus to guarantee a lineage returning to the root at the end of the cycle we need $b+1 - 2a > 0$ or
\beq
a < \frac{b+1}{2}.
\label{aineq}
\eeq

If $\lambda = \sqrt{(c+\ep) (\log n)/n}$ then the expected number of times we will push the infection to a neighboring star is
$$
\frac{\lambda^{k+1}}{\lambda^2 n} \exp(\lambda^2 n) \ge n^{-1 -(k-1)/2 + c} = n^{-a},
$$ 
so we want $ c = (k+1)/2 - a$ or using \eqref{aineq} this is 
$$
c \ge \frac{k+1}{2} - \frac{b+1}{2},
$$
the constant given in the theorem.

\mn
{\bf Lower bound on $\lambda_c$.}
 
To prove the lower bound on the critical value, we follow the approach in the proof of Theorem \ref{lbcr}. Let $x$ be an infected star. We need to give an upper bound on the number of neighboring stars $y$ infected before it dies out. As before when $y$ is infected we freeze the particle and it will be a descendant in
the branching random walk that we use to upper bound the contact process on the stars in the graph. To estimate the number of times $y$ gets infected we 
work backwards from $y$ using a dual branching random walk. Paths are allowed only if the first step away from $y$ moves closer to $x$. The path is terminated when it achieves its goal of reaching $x$. If they reach $x$ while it is occupied, such paths will produce a particle at $y$

The shortest infection path from $y$ to $x$ has length $k+1$. By Lemma \ref{Etrails}, a path of length $\ell$ corresponds to $\lambda^\ell$ infection trails. 
If a path has length $k+1+2m$ then $m+k+1$ steps must go towards $x$ and $m$ away. There is only one way to go towards $x$ but $\le A = \max_{1\le i \le k} a_i$
ways to go away. If we encode a path as a sequence of $-$ for closer to $x$ and $+$ for further away then the number of strings of $m+k+1$ minuses and $m$ pluses is 
$\binom{2m+k+1}{m} \le 2^{2m+k+1}$. Thus the expected number of particles that start at $y$ and successfully make it to $x$ is
\begin{align*}
&\le \sum_{m=0}^\infty 2^{2m+k+1} A^{m} \lambda^{2m+k+1} \\
& \le 2^{k+1} \lambda^{k+1} \sum_{m=0}^\infty (2A \lambda^2)^{m} \le \frac{2^{k+1} \lambda^k}{ 1 - 2n^{-\ep}\log n } 
\end{align*}
since $A \le n^{1-\ep}$ and $\lambda = \sqrt{c(\log n)/n}$. This gives an upper bound of $C\lambda^{k+1}$ of pushing the infection
a distance $k+1$. 

Given this estimate the proof of the lower bound can be completed as before. Since the conclusion is different we redo the proof.
Let $B_{x,y}$ be the total number of infections reaching site $y$ for the contact process on the star $S_x$.
If $\lambda = \sqrt{c (\log n)/n}$ then by Lemma \ref{ubsurv}, 
$$
EB_{x,y} \leq \lambda^{k+1} e^{(1+\epsilon)\lambda^2 n} =\lambda^{k+1} n^{(1+\epsilon)c}.
$$
Let $N = n^{1+b}$. Starting from the origin, there are 
$$
\le {2m \choose m}  N^{m} \cdot 1^m \le 2^{2m} N^m
$$ 
paths of length $2m$ that returns to it. So in expectation we have 
$$
\le 2^{2m} N^m (\lambda^{k+1} n^{(1+\epsilon)c})^{2m}
\leq (2 \lambda^{k+1} n^{(1+b)/2} n^{(1+\epsilon)c})^{2m}
$$
particles returning to the origin. Let 
$$
c_0 = \frac{k+1}{2} - \frac{b+1}{2} 
$$
When $c<c_0(1-\epsilon)$, the sum is finite, so the expected number of infections that return to the origin is finite, which means the process does not survive locally.

\clearp

\section{Proofs for period three trees } \label{sec:per3}

\subsection{ Bound on $\lambda_g$.}

As stated in the introduction,
$$
\lambda_g = 1/ \hbox{ max eigenvalue of } \begin{pmatrix} 0 & a & 1 \\ 1 & 0 & b \\ c & 1 & 0 \end{pmatrix}
$$
Expanding about the first row 
\begin{align*}
 \hbox{det}  \begin{pmatrix} -x & a & 1 \\ 1 & -x & b \\ c & 1 & -x \end{pmatrix} & = -x(x^2-b) -a(-x-bc) +(1+cx) \\
&= -x^3 +(a+b+c) x +abc + 1
\end{align*} 
To solve $x^3-(a+b+c)x - (abc +1) = 0$ we use Cardano's formula to find a solution of  $x^3+px+q=0$
$$
\left( -\frac{q}{2} + \sqrt{ \frac{q^2}{4} + \frac{p^3}{27} } \right)^{1/3} 
+ \left( -\frac{q}{2} + \sqrt{ \frac{q^2}{4} - \frac{p^3}{27} } \right)^{1/3} 
$$
Plugging in $p=-(a+b+c)$ and $q=-(abc+1)$ this becomes
$$
\left( \frac{abc+1}{2} + \sqrt{ \frac{(abc+1)^2}{4} - \frac{(a+b+c)^3}{27} } \right)^{1/3} 
+\left( \frac{abc+1}{2} - \sqrt{ \frac{(abc+1)^2}{4} - \frac{(a+b+c)^3}{27} } \right)^{1/3} 
$$ 
which is not very informative.

To complete the proof we have to address the question: How do we know that the one root
produced by Cardano's formula is the right answer?
The discriminant of a cubic equation $ax^3+bx^2+cx+d=0$ is
\beq
\Delta = 18abcd - 4b^3d + b^2c^2 - 4ac^3 - 27a^2d^2
\label{discrim}
\eeq
When $\Delta>0$ there are three real roots, when $\Delta=0$ a double root, and when $\Delta<0$ two complex roots.
When $a=1$ and $b=0$ this becomes 
$$
-4c^3 -27d^2 = 4(a+b+c)^3 - 27(abc)^2
$$
When $a=2$, $b=3$, $c=4$, this is $4(9^3) - 27\cdot 24^2 < 0$ but this does not always hold.
When $a=b=1$ and $c=n$ this is $4(n+2)^3 - 27n^2$ which is positive for $n\ge 2$. 
Our formula is certainly is the right answer when the  the other two roots are complex. Since the largest eigenvalue is unique, 
it follows that this gives the maximum eigenvalue for all $a,b,c$.

\subsection{Bound on $\lambda_\ell$}

Based on the proof of Theorem \ref{ablam2} we want to solve
\begin{align*}
& a g(b) + 1/g(a) = 1/\lambda \\
& b g(c) + 1/g(b) = 1/\lambda \\
& c g(a) + 1/g(c) = 1/\lambda 
\end{align*}
Rearranging gives
\begin{align}
g(a) & = \frac{-1}{ag(b) - 1/\lambda} \label{ga}\\
g(b) & = \frac{-1}{bg(c) - 1/\lambda} \label{gb}\\
g(c) & = \frac{-1}{cg(a) - 1/\lambda} \label{gc}
\end{align}
Inserting \eqref{gb} into \eqref{ga}
\begin{align*}
&g(a)   = \frac{-1}{\frac{-a}{bg(c) - 1/\lambda} - 1/\lambda} \\
&-\frac{ag(a)}{bg(c) - 1/\lambda} - g(a)/\lambda = -1 \\
& -ag(a) - \left( \frac{g(a)}{\lambda} - 1 \right) \left( bg(c) - \frac{1}{\lambda} \right) = 0
\end{align*}
Now using \eqref{gc}
\begin{align*}
& -ag(a) - \left( \frac{g(a)}{\lambda} - 1 \right) \left( \frac{-b}{cg(a) - 1/\lambda}\right) 
+ \frac{1}{\lambda} \left( \frac{g(a)}{\lambda} - 1 \right) = 0 \\
& -ag(a)(cg(a) - 1/\lambda) + \left( \frac{g(a)}{\lambda} - 1 \right) b 
+ \frac{1}{\lambda} \left( \frac{g(a)}{\lambda} - 1 \right)\left( cg(a) - 1/\lambda \right) = 0
\end{align*}
Collecting terms
\begin{align*}
0 & = -ac g(a)^2 + \frac{(a+b)}{\lambda} g(a) - b\\
& + \frac{c}{\lambda^2} g(a)^2 - \left( \frac{c}{\lambda} + \frac{1}{\lambda^3}\right) g(a) + \frac{1}{\lambda^2} \\
& = \alpha g(a)^2 + \beta g(a) + \gamma
\end{align*} 
where 
\beq
\alpha =  -ac + \frac{c}{\lambda^2} \qquad \beta = \frac{a+b-c}{\lambda} - \frac{1}{\lambda^3}
\qquad \gamma = -b + \frac{1}{\lambda^2}
\label{ga}
\eeq
Let $d=a+b-c$. There is a solution if 
$$
0 \le \beta^2 - 4\alpha\gamma  = \frac{d^2}{\lambda^2} - 2\frac{d}{\lambda^4} + \frac{1}{\lambda^6}
- 4 \left(abc - \frac{bc+ac}{\lambda^2} + \frac{c}{\lambda^4} \right)
$$
A little algebra shows
$$
d^2 = a^2+b^2+c^2 -2ac -2bc +2ab \quad \hbox{so we have} \quad d^2 + 4ac+4bc = (a+b+c)^2
$$
and $-2d - 4c = -2(a+b+c)$ so 
$$
0 \le  - 4abc + \frac{(a+b+c)^2}{\lambda^2} - \frac{2(a+b+c)}{\lambda^4} + \frac{1}{\lambda^6}
$$
Changing variables 
$$
D(x) \equiv  x^3 - 2(a+b+c)x^2 + (a+b+c)^2 x - 4abc
$$

\begin{lemma}
The cubic equation $D(x)=0$   has three real roots.
\end{lemma}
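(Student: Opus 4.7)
The plan is to rewrite $D(x)$ in a form that exposes the root structure. Setting $s = a+b+c$, one checks directly that $x(x-s)^2 = x^3 - 2sx^2 + s^2 x$, so
$$
D(x) = x(x-s)^2 - 4abc.
$$
Thus the roots of $D$ are precisely the solutions of $f(x) = 4abc$, where $f(x) = x(x-s)^2$. The problem has therefore been reduced to studying a single cubic curve and counting its intersections with a horizontal line.

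Next I would do the elementary calculus on $f$. Computing $f'(x) = (x-s)^2 + 2x(x-s) = (x-s)(3x-s)$, one finds a local maximum at $x = s/3$ with value $f(s/3) = \tfrac{4s^3}{27}$, and a local minimum at $x = s$ with value $0$. Since $f(0) = 0$ and $f(x) \to +\infty$ as $x \to +\infty$, the graph of $f$ on $(0,\infty)$ rises from $0$ to $4s^3/27$ on $(0, s/3)$, descends back to $0$ on $(s/3, s)$, then rises unboundedly on $(s, \infty)$.

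It then suffices to locate the height $4abc$ relative to these extrema. The lower bound $0 < 4abc$ is immediate since $a,b,c$ are positive. The upper bound $4abc \le 4s^3/27$ is exactly the AM--GM inequality $(a+b+c)/3 \ge (abc)^{1/3}$. Consequently, the horizontal line $y = 4abc$ meets the graph of $f$ three times (counting multiplicity): once in $(0, s/3]$, once in $[s/3, s)$, and once in $(s, \infty)$, yielding three real roots of $D$. In the equality case $a=b=c$, the line is tangent at the local maximum $x = s/3$, producing a double root there together with a simple root at $4s/3$, which one can verify directly from the factorization $(x-a)^2(x-4a)$ of $D$.

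The main observation on which everything hinges is the factorization $D(x) = x(x-s)^2 - 4abc$; once that is spotted the rest is a short calculus exercise combined with AM--GM, and no appeal to the discriminant formula \eqref{discrim} or to Cardano is needed.
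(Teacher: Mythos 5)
Your proof is correct and takes a genuinely different and arguably cleaner route than the paper's. The paper invokes the discriminant formula \eqref{discrim}, plugs in the coefficients of $D$, simplifies to $\Delta = 16abc\bigl((a+b+c)^3 - 27abc\bigr)$, and then establishes positivity of the second factor by expanding $(a+b+c)^3$ and applying AM--GM to $a^3+b^3+c^3$ together with the rearrangement inequality to the mixed cubic terms. You instead notice the identity $D(x) = x(x-s)^2 - 4abc$ with $s=a+b+c$, which reduces the problem to counting intersections of the cubic $f(x)=x(x-s)^2$ with the horizontal line $y=4abc$; a short calculus computation locates the local maximum $4s^3/27$ at $x=s/3$ and the local minimum $0$ at $x=s$, and the single inequality $0 < 4abc \le 4s^3/27$ (the latter being exactly AM--GM for three positive numbers) pins the line strictly between the two critical values. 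This is more economical --- you avoid the discriminant machinery entirely and need only AM--GM in its most natural form, without the rearrangement inequality. It is also more informative: your argument localizes the roots to $(0,s/3]$, $[s/3,s)$, and $(s,\infty)$, which immediately gives the paper's next lemma (all three roots positive) as well as the subsequent observation that the largest root $x_0$ satisfies $x_0 > a+b+c$, both of which the paper proves separately. The only thing to be careful about, which you handle correctly, is the degenerate case $a=b=c$ where the line is tangent at the local maximum and one must interpret ``three real roots'' as counting multiplicity; your explicit factorization $(x-a)^2(x-4a)$ in that case is a nice sanity check.
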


\begin{proof}
The cubic discriminant of $D$, $\Delta$ defined in \eqref{discrim}, is equal to 
\begin{align*}
\Delta & = 18 \cdot 8(a+b+c)^3 abc - 4 \cdot 8(a+b+c)^3 \cdot 4abc \\
& + 4(a+b+c)^2 \cdot (a+b+c)^4 - 4(a+b+c)^6 - 27 \cdot (4abc)^2 \\
& = 16abc (a+b+c)^3 -27(4abc)^2 \\
& = 16abc(a^3 + b^3 + c^3 + 3a^2b + 3a^2c + 3b^2a + 3b^2c +3c^2 a + 3c^2 b - 21 abc)
\end{align*}
The arithmetic-geometric mean inequality implies
\beq
a^3 + b^3 + c^3 \ge 3abc
\label{AMGM}
\eeq
The rearrangement inequality says that if $x_1 \le x_2 \ldots x_n$ and  $y_1 \le y_2 \ldots y_n$ then
for any permutation $\sigma$
$$
x_ny_1 + \cdots + x_1y_n \le x_{\sigma(1)}y_1 + \cdots + x_{\sigma(n)} y_n \le x_1y_1 + \cdots + x_ny_n
$$

Applying the rearrangement inequality to $a,b,c$ and $ab,ac,bc$ gives
\begin{align*}
3a^2b + 3ac^2 + 3b^2c  \ge 9abc & \quad\hbox{permutation } a,c,b  \\
3ab^2+ 3a^2c +3bc^2  \ge 9abc & \quad\hbox{permutation } b,a,c
\end{align*} 
Using these inequalities with \eqref{AMGM}, which is strict when $a,b,c$ are not all equal, we have
$\Delta>0$ which proves the desired conclusion.
\end{proof}

\begin{lemma}
All three roots $r_1, r_2, r_3$ are positive.
\end{lemma}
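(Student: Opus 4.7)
The plan is to combine the previous lemma (which gives three real roots) with an elementary sign analysis to rule out zero and negative roots. The cleanest route is via Descartes' rule of signs applied to $D(x)$ and $D(-x)$, together with a direct evaluation at $0$.

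First I would observe that $D(0) = -4abc < 0$ since $a,b,c \ge 1$, so $0$ is not a root. Then I would read off the coefficients of
\[
D(x) = x^3 - 2(a+b+c)x^2 + (a+b+c)^2 x - 4abc,
\]
whose signs are $+,-,+,-$, giving three sign changes; and of
\[
D(-x) = -x^3 - 2(a+b+c)x^2 - (a+b+c)^2 x - 4abc,
\]
whose signs are $-,-,-,-$, giving zero sign changes. By Descartes' rule of signs, $D$ has no negative real roots. Since the previous lemma supplied three real roots, and $0$ is not among them, all three must be positive.

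As a sanity check I would verify the conclusion is consistent with Vieta's formulas: the three elementary symmetric functions of the roots are
\[
r_1 + r_2 + r_3 = 2(a+b+c), \quad r_1 r_2 + r_1 r_3 + r_2 r_3 = (a+b+c)^2, \quad r_1 r_2 r_3 = 4abc,
\]
all of which are manifestly positive, which is exactly what positivity of all three real roots requires. (Indeed, positivity of all three symmetric functions, together with the roots being real, is itself sufficient to conclude positivity, by a short Vieta-based contradiction argument; I would use whichever presentation reads more naturally in context.)

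There is no real obstacle here: the only substantive content is the cubic-discriminant computation carried out in the preceding lemma, which is what guarantees three real roots. Once that is in hand, positivity is immediate from inspection of the coefficient signs of $D$.
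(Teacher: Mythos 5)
Your argument is correct, but it takes a different route from the paper. You rule out nonpositive roots by Descartes' rule of signs applied to $D(-x)$ (all coefficients of one sign, hence no negative roots) together with $D(0)=-4abc<0$, and then invoke the preceding lemma to know all three roots are real; in fact your sign observation amounts to the even more elementary remark that every term of $D(x)$ is $\le 0$ for $x\le 0$ and the constant term is strictly negative, so $D(x)<0$ on $(-\infty,0]$. The paper instead works from Vieta's formulas: the three elementary symmetric functions $r_1+r_2+r_3=2(a+b+c)$, $r_1r_2+r_2r_3+r_1r_3=(a+b+c)^2$, $r_1r_2r_3=4abc$ are positive; positivity of the product forces either all three or exactly one root to be positive, and the case of two negative roots is eliminated by a short algebraic contradiction combining the first two identities (essentially the sketch you give parenthetically as an alternative). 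Both proofs lean on the previous lemma only to know the roots are real; your version buys a slightly shorter and more mechanical verification, while the paper's Vieta argument keeps the symmetric-function identities on display, which is consonant with the paper's emphasis that everything depends only on $a+b+c$ and $abc$. Either presentation is complete and correct.
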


\begin{proof} Since $D(x) = (x-r_1)(x-r_2)(x-r_3)$, we have
\begin{align*}
r_1+r_2+r_3 &= 2(a+b+c) > 0 \\
r_1r_2 + r_2r_3 + r_1r_3 &= (a+b+c)^2 > 0 \\
r_1r_2r_3 & = 4abc >0
\end{align*}
The third equation implies that either all of the roots are positive or only one is. To rule out the second case
suppose, without loss of generality, that $r_2,r_3<0$. $r_1+r_2+r_3>0$ implies 
$$
r_1 > -(r_2+r_3) 
$$
and hence $ r_1(r_2+r_3) < -(r_2+r_3)$. Adding $r_2r_3$ and using the second inequality we have
$$
0 < r_1(r_2+r_3) + r_2r_3 < -(r_2+r_3)^2 + r_2r_3 = -r_2^2 - r_2r_3 - r_3^2 < 0
$$
a contradiction.
\end{proof}

Let $x_0$ be the largest root and note that
$$
D(a+b+c) \equiv  - 4abc + (a+b+c)^3 c - 2(a+b+c)^3 + (a+b+c)^3 = -4abc
$$
so $x_0 \ge (a+b+c)$. We want to show that 

\begin{lemma}
If $x > x_0$ then  $g(a),g(b),g(c)>0$. 
\end{lemma}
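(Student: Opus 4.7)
The plan is to chase positivity through the chain $g(a) \to g(c) \to g(b)$ defined by \eqref{ga}--\eqref{gc}. Working in the variable $x = 1/\lambda^2$, the coefficients of the quadratic derived above for $g(a)$ are
\[
\alpha = c(x-a), \qquad -\beta = \sqrt{x}\,(x-(a+b-c)), \qquad \gamma = x-b,
\]
and a direct expansion shows $\beta^2 - 4\alpha\gamma = D(x)$. Since the previous lemma has already given $x_0 \ge a+b+c$, any $x > x_0$ satisfies $x > \max(a,b,c)$ and $x > a+b-c$, so $\alpha, -\beta, \gamma$ are all strictly positive. Hence the quadratic has two real roots, and Vieta's formulas force both of them to be positive. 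This settles $g(a)$.

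Next I recover $g(c)$ from \eqref{gc}, which rearranges to $g(c) = 1/(\sqrt{x}-c\,g(a))$, so $g(c) > 0$ is equivalent to $g(a) < \sqrt{x}/c$. The quadratic opens upward, so both of its roots lie strictly below $\sqrt{x}/c$ precisely when the value at $\sqrt{x}/c$ is positive and the axis of symmetry $-\beta/(2\alpha)$ lies to the left of $\sqrt{x}/c$. A short calculation gives the value as $b(x-c)/c > 0$, and the axis inequality reduces to $x > a-b+c$; both are immediate from $x > a+b+c$.

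For $g(b)$, equation \eqref{gb} gives $g(b) = 1/(\sqrt{x} - b\,g(c))$, and substituting the formula for $g(c)$ reduces the requirement $g(b) > 0$ to $g(a) < M$ with $M = (x-b)/(c\sqrt{x})$. Applying the same value-plus-axis test at $M$, the value simplifies by the convenient cancellation $(x-a)(x-b) + x(a+b-x) = ab$ to $ab(x-b)/(cx) > 0$, and the axis inequality reduces to
\[
x^2 - (a+b+c)x + 2ab > 0.
\]
The polynomial $y \mapsto y^2 - (a+b+c)y + 2ab$ takes value $2ab > 0$ at $y = a+b+c$ and is increasing on $[(a+b+c)/2, \infty)$, so it is strictly positive on $[a+b+c, \infty)$; since $x > x_0 \ge a+b+c$, the inequality holds and $g(b) > 0$.

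The main obstacle is precisely this last axis-of-symmetry step. The earlier positivity checks are routine sign arguments about a quadratic whose coefficients have clearly the right signs, but verifying $g(b) > 0$ forces one to track all three equations at once; the reduction to $x^2 - (a+b+c)x + 2ab > 0$ is the only place where the full strength of $x_0 \ge a+b+c$ is invoked, and once it is in place the chain of positivity for $g(a), g(c), g(b)$ closes.
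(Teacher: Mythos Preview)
Your proof is correct but proceeds quite differently from the paper. The paper exploits the cyclic symmetry of the system: by the same elimination that produced the quadratic $\alpha g(a)^2+\beta g(a)+\gamma=0$, there are analogous quadratics for $g(b)$ and $g(c)$ obtained by cyclically permuting $a,b,c$. The paper then simply lists the nine sign conditions (three for each quadratic) and observes that all of them follow immediately from $1/\lambda^2=x>a+b+c$; since any solution of the full system has each $g$ sitting as a root of its own quadratic, and each quadratic has only positive roots, all three $g$'s are positive. Your approach instead works with only the $g(a)$-quadratic and chases positivity forward through \eqref{gc} and \eqref{gb}, using the ``value-plus-axis'' test twice to bound $g(a)$ below $\sqrt{x}/c$ and then below $(x-b)/(c\sqrt{x})$. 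This is more computational and requires the extra inequality $x^2-(a+b+c)x+2ab>0$, but it has the virtue of being entirely self-contained: you never need to appeal to the existence or structure of the other two quadratics, nor to the observation that a solution of the system forces each $g$ to be a root of its respective quadratic. The paper's argument is shorter once one sees the symmetry; yours makes the logical flow from a single root of a single quadratic to a full positive solution completely explicit.
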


\begin{proof}
When a quadratic
$\alpha x^2 + \beta x + \gamma = 0$ has $\alpha,\gamma >0$ and $\beta <0$ it has two positive roots.
Using \eqref{ga} and then symmetry we want
$$
\begin{matrix}  
-ac + \frac{c}{\lambda^2} >0 & \frac{a+b-c}{\lambda} - \frac{1}{\lambda^3} < 0 & -b + \frac{1}{\lambda^2} > 0 \\
-ba + \frac{a}{\lambda^2} >0 & \frac{b+c-a}{\lambda} - \frac{1}{\lambda^3} < 0 & -c + \frac{1}{\lambda^2} > 0 \\
-cb + \frac{b}{\lambda^2} >0 & \frac{a+c-b}{\lambda} - \frac{1}{\lambda^3} < 0 & -a + \frac{1}{\lambda^2} > 0
\end{matrix}
$$
if $x>x_0$ then $1/\lambda^2 = x > a+b+c$ by the computation before the lemma. Using this it is easy to see that all the desired inequalities hold.
\end{proof}

The next step is to solve the cubic $D(x)=0$. It is probably easiest to do this numerically, which is what we have done in our examples,
but for  completeness we describe the process to get
an exact solution. To apply Cardano's formula we have to eliminate the $x^2$ term. To do this we let $x = y + 2\sigma/3$
where we have introduced $\sigma = a+b+c$ to shorten the formula
\begin{align*}
x^3 & = y^3 + 3 (2\sigma/3) y^2 + 3 (2\sigma/3)^2 y + (2\sigma/3)^3 \\
-2\sigma x^2 &= -2\sigma y^2 - 2\sigma \cdot 2 (2\sigma/3) y  - 2\sigma (2\sigma/3)^2 \\
\sigma^2 x & = \sigma^2 y + \sigma^2 \cdot (2\sigma/3)
\end{align*}
so the new polynomial is
\begin{align*}
0 &=  y^3 + \sigma^2 y [ 4/3 - 8/3 + 1] + \sigma^3 [8/27 - 8/9 + 2/3] - 4abc \\
& = y^3 - \frac{\sigma^2}{3} y + \frac{2\sigma^3}{27} - 4abc
\end{align*}
This can be used to solve the cubic for $y$. After that we set $x=y+2\sigma/3$. Again there is the issue of whether
Cardano's formula gives the solution we want. Since there are always three real roots this will be true if it holds
at one point. However, we have not investigated this question.

\section*{Acknowledgments}
The first four authors are undergraduates who were supported by 
the DOMath program at Duke for eight weeks in the summer of 2018.
XH was a graduate student assistant working with the project. RD was partially supported by NSF grant DMS 1505215 from the probability program.

\end{document}